\newcommand{\C}{\mathbb{C}}
\newcommand{\Hyp}{\mathbb{H}}
\newcommand{\R}{\mathbb{R}}
\newcommand{\Z}{\mathbb{Z}}
\newcommand{\Isom}{\text{Isom}}
\newtheorem{thm}{Theorem}[section]
\newtheorem{prop}[thm]{Proposition}
\newtheorem{lemma}[thm]{Lemma}
\newtheorem{cor}[thm]{Corollary}
\newtheorem{conv}[thm]{Convention}
\theoremstyle{definition}
\newtheorem{defi}[thm]{Definition}
\newtheorem{example}[thm]{Example}
\newtheorem{prob}[thm]{Problem}
\theoremstyle{remark}
\newtheorem{remark}[thm]{Remark}
\numberwithin{equation}{section}
\newcommand{\st}{\;:\;}
\newcommand{\mst}{\;:\;}
\begin{document}

\title[Quasi-isometric co-Hopficity of non-uniform lattices]{Quasi-isometric co-Hopficity of non-uniform lattices in rank-one semi-simple Lie groups}

\author[I.~Kapovich]{Ilya Kapovich}
\thanks{}

\address{
Department of Mathematics\\
University of Illinois at  Urbana-Champaign\\
1409 West Green Street\\
Urbana, IL 61801, USA}
\urladdr{http://www.math.uiuc.edu/\~{}kapovich/} 
\email{kapovich@math.uiuc.edu}
\thanks{The first author was supported by the NSF
  grant DMS-0904200.}

\author[A.~Lukyanenko]{Anton Lukyanenko}
\address{
Department of Mathematics\\
University of Illinois at  Urbana-Champaign\\
1409 West Green Street\\
Urbana, IL 61801, USA}
\urladdr{http://lukyanenko.net}
\email{anton@lukyanenko.net}
\thanks{The authors acknowledge support from the National Science Foundation grant DMS-1107452 ``RNMS: Geometric structures and representation varieties''.}

\subjclass[2010]{Primary 20F, Secondary 57M}

\begin{abstract}
We prove that if $G$ is a non-uniform lattice in a rank-one semi-simple Lie group $\ne \Isom( \Hyp^2_\R)$ then $G$ is quasi-isometrically co-Hopf.
This means that every quasi-isometric embedding $G\to G$ is coarsely surjective and thus is a quasi-isometry.
\end{abstract}

\maketitle

\section{Introduction}

The notion of co-Hopficity plays an important role in group
theory. Recall that a group $G$ is said to be \emph{co-Hopf} if $G$ is
not isomorphic to a proper subgroup of itself, that is, if every
injective homomorphism $G\to G$ is surjective. A group $G$ is \emph{almost
  co-Hopf} if for every injective homomorphism $\phi: G\to G$ we have
$[G:\phi(G)]<\infty$. Clearly, being co-Hopf implies being almost
co-Hopf. The converse is not true: for example, for any $n\ge 1$ the
free abelian group $\mathbb Z^n$ is almost co-Hopf but not co-Hopf.

It is easy to see that any freely decomposable group is not co-Hopf. In
particular, a free group of rank at least 2 is not co-Hopf. It is also
well-known that finitely generated nilpotent groups are always almost
co-Hopf and, under some additional restrictions, also co-Hopf~\cite{Be03}. An important result of Sela~\cite{Sela97} states that a torsion-free
non-elementary word-hyperbolic group $G$ is co-Hopf if and only if $G$
is freely indecomposable. Partial generalizations of this result are
known for certain classes of relatively hyperbolic
groups, by the work of Belegradek and Szczepa{\'n}ski~\cite{BeSz08}. Co-Hopficity has also been extensively studied for 3-manifold
groups and for Kleinian groups. Delzant and Potyagailo~\cite{DP03} gave a complete
characterization of co-Hopfian groups among non-elementary
geometrically finite Kleinian groups without 2-torsion. 

A counterpart agebraic notion is that of Hopficity. A group $G$ is
said to be \emph{Hopfian} if every surjective endomorphism $G\to G$ is
necessarily injective, and hence is an automorphism of $G$. This
notion is also extensively studied in geometric group theory. In particular, an
important result of Sela~\cite{Sela99} shows that every torsion-free
word-hyperbolic group is Hopfian. The notion of Hopficity admits a
number of interesting ``virtual'' variations. Thus a group $G$ is called
\emph{cofinitely Hopfian} if every endomorphism of $G$ whose image is
of finite index in $G$, is an automorphism of $G$, see, for example~\cite{BGHM}.

A key general theme in geometric group theory is the study of
``large-scale'' geometric properties of finitely generated groups.  Recall that if $(X,d_X)$ and $(Y,d_Y)$ are metric spaces, a map $f:X\to Y$ is called a \emph{coarse embedding} if there exist monotone non-decreasing functions $\alpha, \omega: [0,\infty)\to \mathbb R$ such that $\alpha(t)\le \omega(t)$, that $\lim_{t\to\infty} \alpha(t)=\infty$, and such that for all $x,x'\in X$ we have
\begin{align}
\alpha(d_X(x,x'))\le d_Y(f(x), f(x'))\le \omega(d_X(x,x')). \tag{*}
\end{align}
If $d_X$ is a path metric, then for any coarse embedding $f:X\to Y$ the function $\omega(t)$ can be chosen to be affine, that is, of the form $\omega(t)=at+b$ for some $a,b\ge 0$.

A coarse map $f$ is called a \emph{coarse equivalence} if $f$ is \emph{coarsely surjective}, that is, if there is $C\ge 0$ such that for every $y\in Y$ there exists $x\in X$ with $d_Y(y,f(x))\le C$. A map $f:X\to Y$ is called a \emph{quasi-isometric embedding} if $f$ is a coarse embedding and the functions $\alpha(t), \omega(t)$ in $(*)$ can be chosen to be affine,  that is, of the form $\alpha(t)=\frac{1}{\lambda}t -\epsilon$, $\omega(t)=\lambda t+\epsilon$ where $\lambda\ge 1$, $\epsilon\ge 0$. Finally, a map $f:X\to Y$ is a \emph{quasi-isometry} if $f$ is a quasi-isometric embedding and $f$ is coarsely surjective.

The notion of co-Hopficity has the following natural counterpart for metric spaces. We say that a metric space $X$ is \emph{quasi-isometrically co-Hopf} if every quasi-isometric embedding $X\to X$ is coarsely surjective, that is, if every quasi-isometric embedding $X\to X$ is a quasi-isometry. More generally, a metric space $X$ is called \emph{coarsely co-Hopf} if every coarse embedding $X\to X$ is coarsely surjective. Clearly, if $X$ is coarsely co-Hopf then $X$ is quasi-isometrically co-Hopf. If $G$ is a finitely generated group with a word metric $d_G$ corresponding to some finite generating set of $G$, then every injective homomorphism $G\to G$ is a coarse embedding. This easily implies that if $(G, d_G)$ is coarsely co-Hopf then the group $G$ is almost co-Hopf.

\begin{example}
The real line $\R$ is coarsely co-Hopf (and hence quasi-isometrically
co-Hopf). This follows from the fact that any coarse embedding must
send the ends of $\R$ to distinct ends. Since $\R$ has two ends, a
coarse embedding induces a bijection on the set of ends of $\R$. It is
then not hard to see that a coarse embedding from $\R$ to $\R$ must be coarsely surjective. See \cite{bridsonhaefliger} for the formal definition of ends of a metric space.
\end{example}

\begin{example}
\label{ex:rooted}
The rooted regular binary tree  $T_2$ is not quasi-isometrically co-Hopf. We can
identify the set of vertices of $T_2$ with the set of all finite
binary sequences. The root of $T_2$ is the empty binary sequence
$\epsilon$ and for a finite binary sequence $x$ its left child is the
sequence $0x$ and the right child is the sequence $1x$. Consider the
map $f: T_2\to T_2$ which maps $T_2$ isometrically to a copy of itself
that ``hangs below'' the vertex $0$. Thus $f(x)=0x$ for every finite
binary sequence $x$. Then $f$ is an isometric embedding but the image
$f(T_2)$ is not co-bounded in $T_2$ since it misses the entire
infinite branch located below the vertex $1$.

\end{example}

\begin{example}\label{ex:free}

Consider the free group $F_2 = F(a,b)$ on two
  generators. Then $F_2$ is not quasi-isometrically co-Hopf.

The Cayley graph $X$ of $F_2$ is a regular $4$-valent tree with every
edge of length $1$. We may view $X$ in the plane so that every vertex
has one edge directed upward, and three downward. Picking a vertex
$v_0$ of $X$, denote its left branch by $X_1$ and the remainder of the
tree by $X_2$. We have $X_1 \cup X_2 = X$, and $X_1$ is a rooted
ternary tree. Define a quasi-isometric embedding $f: X \rightarrow X$
by taking $f$ to be a shift on $X_1$ (defined similarly to Example \ref{ex:rooted})
and the identity on $X_2$. The map $f$ is not coarsely surjective, but
it is a quasi-isometric embedding. Moreover, for any vertices $x,x'$ of $X$ we
have $|d(f(x),f(x'))-d(x,x')|\le 1$.

One can also see that $F_2 = F(a,b)$ is not quasi-isometrically
co-Hopf for algebraic reasons. Let $u,v\in F(a,b)$ with $[u,v]\ne
1$. Then there is an injective homomorphism $h: F(a,b)\to F(a,b)$ such
that $h(a)=u$ and $h(b)=v$. This homomorphism $f$ is always a
quasi-isometric embedding of $F(a,b)$ into itself.

 If, in addition, $u$ and $v$ are chosen so that  $\langle u,
 v\rangle\ne F(a,b)$ then $[F(a,b): h(F(a,b))]=\infty$ and the image
 $h(F(a,b))$ is not co-bounded in $F(a,b)$.

Thus, the group $F_2$ is not almost co-Hopf and not quasi-isometrically co-Hopf.
\end{example}

\begin{example} There do exist finitely generated groups that are algebraically co-Hopf but not quasi-isometrically  co-Hopf. The simplest example of this kind is the solvable Baumslag-Solitar group $B(1,2)=\langle a,t| t^{-1}at=a^2\rangle$. It is well-known that $B(1,2)$ is co-Hopf. 

To see that $B(1,2)$ is not quasi-isometrically co-Hopf we use the fact that  $B(1,2)$ admits an isometric properly discontinuous co-compact action on a proper geodesic metric space $X$ that is ``foliated'' by copies of the hyperbolic plane $\Hyp^2_\R$.   We refer the reader to the paper of Farb and Mosher~\cite{FM98} for a detailed description of the space $X$, and will only briefly recall the properties of $X$ here. 

Topologically, $X$ is homeomorphic to the product $\mathbb R\times
T_3$ where $T_3$ is an infinite 3-regular tree (drawn upwards): there
is a natural projection $p: X\to T_3$ whose fibers are homeomorphic to
$\mathbb R$. The boundary of $T_3$ is decomposed into two sets: the
``lower boundary'' consisting of a single point $u$ and the ``upper
boundary'' $\partial_{\delta} X$ which is homeomorphic to the Cantor
set (and can be identified with the set of dyadic rationals). For any
bi-infinite geodesic $\ell$ in $T_3$ from $u$ to a point of
$\partial_{\delta} X$ the full-$p$-preimage of $\ell$ in $X$ is a copy
of the hyperbolic plane $\Hyp^2_\R$ (in the upper-half plane model).
The $p$-preimage of any vertex of $T_3$ is a horizontal horocycle in
the $\Hyp^2_\R$-``fibers''. Any two $\mathbb H^2$-fibers intersect
along a complement of a horoball in  $\Hyp^2_\R$.  

Similar to the above example for $F(a,b)$, we can take a
quasi-isometric embedding $f: T_3\to T_3$ whose image misses an
infinite subtree in $T_3$ and such that $|d(x,x')-d(f(x), f(x'))|\le 1$ for any vertices $x,x'$ of $T_3$. It is not hard to see that this map $f$ can be extended along the $p$-fibers to a map $\widetilde f: X\to X$ such that $\widetilde f$ is a quasi-isometric embedding but not coarsely surjective. Since $X$ is quasi-isometric to $B(1,2)$, it follows that $B(1,2)$ is not quasi-isometrically co-Hopf.
\end{example}

\begin{example}
Grigorchuk's group $G$ of intermediate growth provides another
intersting example of a group that is not quasi-isometrically
co-Hopf. This group $G$ is finitely generated and can be realized as a
group of automorphisms of the regular binary rooted tree $T_2$. The
group $G$ has a number of unusual algebraic properties: it is an
infinite 2-torsion group, it has intermediate growth, it is amenable
but not elementary amenable and so on. See Ch. VIII in \cite{DLH} for
detailed background on the Grigorchuk group. It is known that there
exists a subgroup $K$ of index 16 in $G$ such that $K\times K$
is isomorphic to a subgroup of index 64 in $G$. The map $K\to K\times
K$, $k\mapsto (k,1)$ is clearly a quasi-isometric embedding which is
not coarsely surjective. Since both $K$ and $K\times K$ are
quasi-isometric to $G$, it follows that $G$ is not quasi-isometrically
co-Hopf.

\end{example}

For Gromov-hyperbolic groups and spaces quasi-isometric co-Hopficity is closely related to the properties of their hyperbolic boundaries. We say that a compact metric space $K$ is \emph{topologically co-Hopf} if $K$ is not homeomorphic to a proper subset of itself. We say that $K$ is \emph{quasi-symmetrically co-Hopf} if every quasi-symmetric map $K\to K$ is surjective. Note that for a compact metric space $K$ being topologically co-Hopf obviously implies being quasi-symmetrically co-Hopf. 

\begin{example}
\label{ex:merenkov}
A recent important result of Merenkov~\cite{Merenkov10} shows that the converse implication does not hold. He constructed a round Sierpinski carpet $\mathbb S$ such that $\mathbb S$ is quasi-symmetrically co-Hopf. Since $\mathbb S$ is homeomorphic to the standard ``square'' Serpinski carpet, clearly $\mathbb S$ is not topologically co-Hopf.
\end{example}

It is well-known (see, for example, \cite{BoSch}) that if $X,Y$ are proper Gromov-hyperbolic geodesic metric spaces, then any quasi-isometric embedding $f: X\to Y$ induces a quasi-symmetric topological embedding $\partial f: \partial X\to \partial Y$ between their hyperbolic boundaries. It is then not hard to see that if $G$ is a word-hyperbolic group whose hyperbolic boundary $\partial G$ is quasi-symmetrically co-Hopf (e.g. if it is topologically co-Hopf), then $G$ is quasi-isometrically co-Hopf. This applies, for example, to any word-hyperbolic groups whose boundary $\partial G$ is homeomorphic to an $n$-sphere (with $n\ge 1$), such as fundamental groups of closed Riemannian manifolds with all sectional curvatures $\le -1$.

 The main result of this paper is the following:
 
 \begin{thm}\label{thm:A}
 Let $G$ be a non-uniform lattice in a rank-one semi-simple real Lie group other than $\Isom( \Hyp^2_\R)$. Then $G$ is quasi-isometrically co-Hopf.
 \end{thm}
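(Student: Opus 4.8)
The plan is to promote R.~Schwartz's quasi-isometric rigidity of neutered spaces from quasi-isometries to quasi-isometric \emph{embeddings}, using a few ``coarse co-Hopf'' inputs to force an embedding to behave like a quasi-isometry. First I would invoke Schwartz's work on the quasi-isometry classification of rank-one lattices: the non-uniform lattice $G$ acts properly discontinuously and cocompactly by isometries on a \emph{neutered space} $\Omega=\Omega(X,\mathcal H)$, obtained from the rank-one symmetric space $X$ associated to the ambient Lie group by deleting a $G$-invariant family $\mathcal H=\{H_i\}$ of pairwise disjoint open horoballs centered at the parabolic fixed points of $G$. Hence $G$ is quasi-isometric to $\Omega$, and it suffices to prove that every quasi-isometric embedding $\phi\colon\Omega\to\Omega$ is coarsely surjective. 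Since $X\neq\Hyp^2_\R$, the bounding horospheres $\partial H_i$, with the metric induced from $\Omega$, are quasi-isometric to a nilpotent Carnot group $N$ (a Euclidean space of dimension $\geq 2$, or a Heisenberg-type group), which is also the model for the parabolic subgroups of $G$; moreover $G$, and hence $\Omega$, is hyperbolic relative to these virtually nilpotent parabolic subgroups.

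The first and most delicate step is to show that $\phi$ coarsely preserves the horoball pattern: there are an injective map $\sigma$ of the index set and a constant $C$ such that $\phi(\partial H_i)$ lies within Hausdorff distance $C$ of $\partial H_{\sigma(i)}$ for every $i$. Existence of $\sigma(i)$: the subset $\phi(\partial H_i)\subseteq\Omega$ is quasi-isometric to $N$, hence is ``wide'' (its asymptotic cones have no global cut points), and by the tree-graded structure of the asymptotic cones of the relatively hyperbolic space $\Omega$ (Dru\c{t}u--Sapir) a wide subset is confined to a bounded neighbourhood of a single peripheral coset, i.e.\ of a single horosphere $\partial H_{\sigma(i)}$. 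Injectivity of $\sigma$: here I would use that a quasi-isometric self-embedding of the Carnot group $N$ is coarsely surjective --- a coarse invariance-of-domain (degree) statement, classical for $N=\R^{m}$ and proved analogously in the Heisenberg-type cases via the homogeneous dimension; if $\sigma(i)=\sigma(i')$ then $\phi(\partial H_i)$ and $\phi(\partial H_{i'})$ would both be coarsely dense in $\partial H_{\sigma(i)}$, hence at finite Hausdorff distance from one another, forcing $\partial H_i$ and $\partial H_{i'}$ to be at finite Hausdorff distance in $\Omega$ --- impossible for $i\neq i'$ because distinct horospheres diverge.

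Next I would extend $\phi$ to a quasi-isometric embedding $\Phi\colon X\to X$ by filling in the deleted horoballs: replace $\phi|_{\partial H_i}$ by a nearby quasi-isometric embedding $\partial H_i\to\partial H_{\sigma(i)}$ and extend it radially into $H_i$, using the warped-product structure $H\cong\partial H\times[0,\infty)$ (with the Carnot metric at depth $0$). Injectivity of $\sigma$, together with $\phi$ being a quasi-isometric embedding on the bulk, guarantees that $\Phi$ is a quasi-isometric embedding of the Gromov-hyperbolic space $X$ that coarsely carries each $H_i$ onto $H_{\sigma(i)}$. Consequently $\Phi$ induces a quasi-symmetric topological embedding $\partial\Phi\colon\partial X\to\partial X$; as $\partial X$ is a topological sphere, $\partial\Phi$ is an open map by invariance of domain, hence surjective by compactness and connectedness, hence a quasi-symmetric homeomorphism, and it carries the $G$-invariant pattern of round ``shadows'' of the horoballs to itself. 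If $X=\Hyp^n_{\mathbb H}$ or $\Hyp^2_{\mathbb O}$, Pansu's rigidity theorem already forces $\partial\Phi$ to be conformal, hence M\"obius. If $X=\Hyp^n_\R$ with $n\geq 3$, or $X=\Hyp^n_\C$, Schwartz's pattern-rigidity theorem --- a quasi-conformal self-homeomorphism of $\partial X$ preserving such a pattern of round balls must be M\"obius; this is exactly the statement that fails for $\partial\Hyp^2_\R=S^1$, which is why that case is excluded --- forces $\partial\Phi$ to be M\"obius. In every case $\Phi$, and hence $\phi$, lies at bounded distance from an isometry $\gamma$ of $X$.

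Finally I would deduce coarse surjectivity. Since $\gamma$ is an isometry, $\gamma(\Omega)=X\setminus\bigcup_i\operatorname{int}(\gamma H_i)$, where each $\gamma H_i$ is a horoball of the same size as $H_i$. From $\phi(\Omega)\subseteq\Omega$ one gets that the ``deep part'' of every deleted horoball is covered by the disjoint family $\{\gamma H_i\}$, hence is contained in a single $\gamma H_i$; this forces $\gamma$ to permute the parabolic fixed points of $G$. Since there are only finitely many $G$-orbits of parabolic points --- hence finitely many horoball sizes --- $\gamma$ moves each $H_i$ to within uniformly bounded Hausdorff distance of some horoball of $\mathcal H$, so $\gamma(\Omega)$ lies within bounded Hausdorff distance of $\Omega$. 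Therefore $\phi(\Omega)$ is coarsely dense in $\Omega$, so $\phi$, and hence the original quasi-isometric embedding $G\to G$, is coarsely surjective. The crux of the argument --- and where essentially all the work goes --- is the second paragraph: confining the image of each horosphere to a single peripheral and ruling out collapsing of horospheres (this is exactly where the coarse co-Hopf property of the nilpotent groups $N$ enters), together with the verification that $\partial\Phi$, which a priori only maps the shadow pattern injectively into itself, is nevertheless a pattern-preserving homeomorphism, so that Schwartz's and Pansu's rigidity results apply.
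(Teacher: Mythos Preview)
Your overall plan shares the setup with the paper---reduce to the truncated space $\Omega$, use that peripheral horospheres go to peripheral horospheres, use coarse co-Hopficity of the nilpotent horospheres to get that each $\phi\vert_{\partial H_i}$ is a quasi-isometry onto $\partial H_{\sigma(i)}$, and extend radially over the deleted horoballs to $\Phi\colon X\to X$---but then diverges sharply. You want $\Phi$ to be a \emph{quasi-isometric} embedding of $X$, pass to a quasi-symmetric embedding $\partial\Phi$ of the boundary sphere, use invariance of domain to get surjectivity, and then invoke Pansu/Schwartz pattern rigidity to conclude $\Phi$ lies near an isometry. The paper instead shows only that $\Phi$ is a \emph{coarse} embedding and finishes with a general cohomological fact: any uniformly contractible metric on $\mathbb R^n$ is coarsely co-Hopf (via compactly supported cohomology, after Kapovich--Kleiner). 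That single result, applied once to $X$, replaces your entire boundary-rigidity package; no boundary map, no invariance of domain, no pattern rigidity is needed.

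The substantive gap in your route is the assertion that ``injectivity of $\sigma$, together with $\phi$ being a quasi-isometric embedding on the bulk, guarantees that $\Phi$ is a quasi-isometric embedding of $X$.'' This is not automatic. On $\Omega$ the map $\phi$ is a $(d_\Omega)$-QI embedding, but the inclusion $(\Omega,d_\Omega)\hookrightarrow(X,d_X)$ is only a coarse embedding (horospheres are exponentially distorted), so a priori $\Phi\vert_\Omega$ satisfies only $d_X(\Phi x,\Phi y)\ge\alpha(d_X(x,y))$ for some sublinear $\alpha$. In Schwartz's original argument this is harmless because $\phi$ is a quasi-isometry and hence has a coarse inverse: two mutually coarse-inverse coarsely Lipschitz maps between path-metric spaces are automatically quasi-isometries. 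Without the inverse you must produce the linear lower bound directly, and the paper's own case analysis yields only a coarse (non-affine) lower bound. If $\Phi$ is merely a coarse embedding, the boundary map $\partial\Phi$ is not known to exist as a quasi-symmetric (or even quasiconformal) embedding, so neither invariance of domain in the quasi-symmetric category nor Schwartz's pattern-rigidity theorem---stated for quasiconformal maps---applies. It may be possible to upgrade $\Phi$ to a genuine QI embedding by carefully tracking how $X$-geodesics decompose into $\Omega$-segments and horoball chords and showing $\phi$ respects that decomposition, but that is a nontrivial argument you have not supplied; the paper's cohomological shortcut is precisely designed to avoid it.
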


Thus, for example, if $M$ is a complete finite volume non-compact hyperbolic
manifold of dimension $n\ge 3$ then $\pi_1(M)$ is quasi-isometrically
co-Hopf. Note that if $G$ is a non-uniform lattice in $\Isom( \Hyp^2_\R)$ then
the conclusion of Theorem~\ref{thm:A} does not hold since $G$ is a
virtually free group. 

If $G$ is a uniform lattice in a rank-one semi-simple real Lie group
(including possibly a lattice in $\Isom( \Hyp^2_\R)$) then $G$ is
Gromov-hyperbolic with the boundary $\partial G$ being homeomorphic to
$\mathbb S^n$ (for some $n\ge 1$). In this case it is easy to see that
$G$ is also quasi-isometrically co-Hopf since every topological
embedding from $\mathbb S^n$ to itself is necessarily surjective.

\begin{conv}\label{conv:X} From now on and for the remainder of this paper  let $X\ne
  \Hyp^2_\R$ be a rank-one negatively curved symmetric space with
  metric $d_X$ (or just $d$ in most cases). Namely, $X$ is isometric
  to a hyperbolic space $\Hyp^n_\R$ (with $n\ge 3$), $\Hyp^n_\C$ (with $n \geq 2$),
  $\Hyp^n_{H}$ over the reals, complexes, or quaternions, or to the
  octonionic plane $\Hyp^2_{\mathbb O}$.  
\end{conv}

If $G$ is as in Theorem~\ref{thm:A}, then $G$ acts properly
discontinuously (but with a non-compact quotient) by isometries on such a space $X$ and there exists a $G$-invariant collection $\mathcal B$ of disjoint horoballs in $X$ such that $(X\setminus \mathcal B) /G$ is compact.  The ``truncated'' space $\Omega=X\setminus \mathcal B$, endowed with the induced path-metric $d_\Omega$ is quasi-isometric to the group $G$ by the Milnor-Schwartz Lemma. Thus it suffices to prove that $(\Omega, d_\Omega)$ is quasi-isometrically co-Hopf.

Richard Schwartz~\cite{schwartz96} established quasi-isometric rigidity for non-uniform lattices in rank-one semi-simple Lie groups and we use his proof as a starting point. 

First, using coarse cohomological methods (particularly techniques of Kapovich-Kleiner~\cite{kapovich-kleiner2005}), we prove that spaces homeomorphic to $\mathbb R^n$ with ``reasonably nice'' metrics are coarsely co-Hopf.  This result applies to the Euclidean space $\mathbb R^n$ itself, to simply connected nilpotent Lie groups,  to the rank-one symmetric spaces $X$ mentioned above, as well as to the horospheres in $X$.
Let $f:(\Omega, d_\Omega) \to (\Omega, d_\Omega)$ be a quasi-isometric embedding. Schwartz' work implies that for every peripheral horosphere $\sigma$ in $\Omega$ there exists a unique peripheral horosphere $\sigma'$ of $X$ such that $f(\sigma)$ is contained in a bounded neighborhood of $\sigma'$. Using coarse co-Hopficity of horospheres,  mentioned above, we conclude that $f$ gives a quasi-isometry  (with controlled constants) between $\sigma$ and $\sigma'$. Then, following Schwartz, we extend the map $f$ through each peripheral horosphere to the corresponding peripheral horoball $B$ in $X$. We then argue that the extended map $\hat f: X\to X$ is a coarse embedding. Using coarse co-Hopficity of $X$, it follows that $\hat f$ is coarsely surjective, which implies that the original map  $f:(\Omega, d_\Omega) \to (\Omega, d_\Omega)$ is coarsely surjective as well.

It seems likely that the proof of Theorem~\ref{thm:A} generalizes to
an appropriate subclass of relatively hyperbolic groups. 
However, a more intriguing question is to understand what happens for higher-rank lattices:

\begin{prob}
Let $G$ be a non-uniform lattice in a semi-simple real Lie group of rank $\ge 2$. Is $G$ quasi-isometrically co-Hopf?
\end{prob}

Unlike the groups considered in the present paper, higher-rank lattices are not relatively hyperbolic. Quasi-isometric rigidity for higher-rank lattices is known to hold, by the result of Eskin~\cite{Eskin98}, but the proofs there are quite different from the proof of Schwartz in the rank-one case.

Another natural question is:

\begin{prob}
Let $G$ be as in Theorem~\ref{thm:A}. Is $G$ coarsely co-Hopf?
\end{prob}
Our proof only yields quasi-isometric co-Hopficity, and it is possible that coarse co-Hopficity actually fails in this context.

The result of Merenkov (Example \ref{ex:merenkov}) produces the first example of a compact metric space $K$ which is quasi-symmetrically co-Hopf but not topologically co-Hopf.
Topologically, $K$ is homeomorphic to the standard Sierpinkski carpet and there exists a word-hyperbolic group (in fact a Kleinian group) with boundary homeomorphic to $K$.
However, the metric structure on the Sierpinski carpet in Merenkov's example is not ``group-like'' and is not quasi-symmetric to the visual metric on the boundary of a word-hyperbolic group.

\begin{prob}
Does there exist a word-hyperbolic group $G$ such that $\partial G$ (with the visual metric) is quasi-symmetrically co-Hopf (and hence $G$ is quasi-isometrically co-Hopf), but such that $\partial G$ is not topologically co-Hopf? In particuar, do there exist examples of this kind where $\partial G$ is homeomorphic to the Sierpinski carpet or the Menger curve?
\end{prob}

The above question is particularly interesting for the family of hyperbolic buildings $I_{p,q}$ constructed by Bourdon and Pajot~\cite{Bourdon97,BP99}. In their examples $\partial I_{p,q}$ is homeomorphic to the Menger curve, and it turns out to be possible to precisely compute the conformal dimension of $\partial I_{p,q}$. Note that, similar to the Sierpinski carpet, the Menger curve is not topologically co-Hopf.

\begin{prob}
Are the Burdon-Pajot buildings $I_{p,q}$ quasi-isometrically co-Hopf? Equivalently, are their boundaries $\partial I_{p,q}$ quasi-symmetrically co-Hopf?
\end{prob}

It is also interesting to investigate quasi-isometric and coarse
co-Hopficity for other natural classes of groups and metric spaces.
In an ongoing work (in preparation), Jason Behrstock, Alessandro
Sisto, and Harold Sultan study quasi-isometric co-Hopficity for mapping class
groups and also characterize exactly when this property holds for
fundamental groups of 3-manifolds.

{\it Acknowledgement:} The authors would like to thank Misha Kapovich
for useful conversations. 

\section{Geometric Objects}
\subsection{Horoballs}

Recall that, by Convention~\ref{conv:X}, $X$ is a rank one symmetric
space different from  $\Hyp^2_\R$. Namely, $X$ is isometric to a
hyperbolic space $\Hyp^n_\R$ (with $n\ge 3$), $\Hyp^n_\C$ (with $n\ge 2$), $\Hyp^n_{H}$ over the reals, complexes, or quaternions, or to the octonionic plane $\Hyp^2_{\mathbb O}$. We recall some properties of $X$. See \cite{bridsonhaefliger}, Chapter II.10, for details.
\begin{defi}Let $0\in X$ be a basepoint and $\gamma$ a geodesic ray starting at $0$. The associated function $b: X \rightarrow \R$ given by
\begin{align} b(x) = \lim_{s \rightarrow \infty} d(x, \gamma(s)) - s\end{align}
is known as a Busemann function on $X$. A \textit{horosphere} is a level set of a Busemann function. The set $b^{-1}[t_0, \infty) \subset X$ is a \textit{horoball}. Up to the action of the isometry group on $X$, there is a unique Busemann function, horosphere, and horoball.
\end{defi}

A Busemann function $b(x)$ provides a decomposition of $X$ into \textit{horospherical coordinates}, a generalization of the upper-halfspace model. Namely, let $\sigma = b^{-1}(0)$ and decompose $X=\sigma \times \R^+$ as follows: given $x\in X$, flow along the gradient of $b$ for time $b(x)$ to reach a point $s \in \sigma$, and write $x=(s, e^{b(x)})$. In horospherical coordinates, the $\sigma$-fibers $\{s\}\times \R^+$ are geodesics, the $R^+$-fibers $\sigma \times \{t_0\}$ are horospheres, and the sets $\sigma \times [t_0, \infty)$ are horoballs. Other horoballs appear as closed balls tangent to the boundary $\sigma\times\{0\}$.

If $(M,d)$ is a metric space and $C\ge 0$, a path $\gamma: [a,b]\to M$,
parameterized by arc-length, is called a \emph{$C$-rough geodesic} in
$M$, if for any $t_1,t_2\in [a,b]$ we have 
\begin{align}
\big| d(\gamma(t_1), \gamma(t_2))- |t_1-t_2|\big|\le C.
\end{align}

If $Y, Y'$ are metric spaces, a map $f:Y\to Y'$ is \emph{coarsely
  Lipschitz} if there exists $C>0$ such that for any
$y_1,y_2\in Y$ we have $d_{Y'}(f(y_1),f(y_2))\le C d_Y(y_1,y_2)$. If
$Y$ is a path metric space then it is easy to see that  $f:Y\to Y'$ is
coarsely Lipschitz if and only if there exist constants $C, C'>0$ such that for any
$y_1,y_2\in Y$ with $d_Y(y_1,y_2)\le C$ we have $d_{Y'}(f(y),f(y'))\le
C'$.

The following two lemmas appear to be well known folklore facts:

\begin{lemma}
\label{lemma:horoball1}
There exists $C>0$ with the following property: Let $\mathcal B$ be a horoball in $X$, $x_1 \in X \backslash \mathcal B$ and $x_2 \in \mathcal B$. Let $b$ be the point in $\mathcal B$ closest to $x_1$. Then the piecewise geodesic $[x_1, b] \cup [b, x_2]$ is a $C$-rough geodesic.
\begin{proof}
Acting by isometries of $X$, we may assume that $\mathcal B$ is a fixed horoball that is tangent to the boundary of $X$ in the horospherical model. We may also assume that $b$ is the top-most point of $\mathcal B$, so that $x_1$ lies in the vertical geodesic passing through $b$. See Figure \ref{fig:horoball1}.

Consider the ``top'' of $\mathcal B$, i.e.\ the maximal subset of $\partial \mathcal B$ that is a graph in horospherical coordinates. Considering the Riemannian metric on $X$ in horospherical coordinates, one sees that the geodesic $[x_1, x_2]$ must pass through the top of $\mathcal B$.  Setting $C$ to be the radius of the top of $\mathcal B$, centered at $b$, completes the proof.
\end{proof}
\end{lemma}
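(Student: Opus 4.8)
The plan is to reduce everything to an explicit computation in the horospherical (upper half-space type) model, using the homogeneity of $X$. First I would normalize: acting by $\Isom(X)$, arrange that $\mathcal B$ is the standard horoball tangent to the boundary sphere $\sigma \times \{0\}$, with top-most point $b$, and that $x_1$ lies on the vertical geodesic ray through $b$ (this is the ray that realizes the distance from $x_1$ to $\mathcal B$, and $b$ is the nearest-point projection of $x_1$ onto $\mathcal B$). The key geometric input is that in these coordinates the boundary horosphere $\partial \mathcal B$ has a compact ``top'' — the maximal piece that is a graph over $\sigma$ — whose diameter is bounded by a universal constant depending only on $X$ (since all horoballs are isometric, this ``radius of the top'' $C$ is the same for every horoball). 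I would state this as the definition of $C$.

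The main step is to show that the geodesic segment $[x_1,x_2]$ must cross the top of $\mathcal B$. Here $x_1 \in X \setminus \mathcal B$ sits above $b$ on the vertical geodesic, and $x_2 \in \mathcal B$. A geodesic from a point outside a horoball to a point inside it enters through $\partial \mathcal B$; because $x_1$ lies directly above the tangency point and the Riemannian metric in horospherical coordinates forces geodesics leaving $x_1$ downward to stay ``near the top'' until they descend into $\mathcal B$ (the sides of $\mathcal B$ are exponentially far from $b$ in the intrinsic metric while being bounded in the coordinate), the entry point $p$ of $[x_1,x_2]$ into $\mathcal B$ lies in the top, so $d(b,p) \le C$. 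This is the step I expect to be the main obstacle: making rigorous the claim that the geodesic cannot ``sneak in through the side'' of the horoball. One clean way is to use convexity of the horoball and the fact that nearest-point projection onto a horoball is distance-nonincreasing, together with the observation that any point of $\partial \mathcal B$ outside the top has nearest-point projection to the vertical geodesic through $b$ landing strictly below $b$; combined with $x_1$ being above $b$, a first-variation/convexity argument shows the segment $[x_1,x_2]$ hits $\partial\mathcal B$ within bounded distance of $b$.

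Granting that, the triangle inequality finishes the estimate. Write the geodesic $[x_1,x_2]$ as $[x_1,p] \cup [p,x_2]$ where $p \in \partial \mathcal B$ with $d(b,p)\le C$. Then for the piecewise geodesic $\gamma = [x_1,b]\cup[b,x_2]$, parametrized by arclength of total length $L = d(x_1,b) + d(b,x_2)$, and for $t_1 \le t_2$ in its domain, the only case to check is when $\gamma(t_1)$ lies on $[x_1,b]$ and $\gamma(t_2)$ lies on $[b,x_2]$. Then
\begin{align}
d(\gamma(t_1),\gamma(t_2)) &\le d(\gamma(t_1),b) + d(b,\gamma(t_2)) = |t_1-t_2|,\\
d(\gamma(t_1),\gamma(t_2)) &\ge d(\gamma(t_1),x_1) + d(x_1,x_2) + d(x_2,\gamma(t_2)) - 2\,d(x_1,x_2)\cdots
\end{align}
— more directly, since $[x_1,p]\cup[p,x_2]$ is a genuine geodesic of length $d(x_1,p)+d(p,x_2)$, and $d(x_1,p) \ge d(x_1,b) - d(b,p) \ge d(x_1,b) - C$, $d(p,x_2) \ge d(b,x_2) - C$, we get $d(x_1,x_2) \ge L - 2C$, hence $d(\gamma(t_1),\gamma(t_2)) \ge |t_1-t_2| - 2C$ after the usual rearrangement (using that each of $\gamma(t_1)$, $\gamma(t_2)$ is within the relevant distance of the endpoints). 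Replacing $C$ by $2C$ (or $4C$) as needed gives the asserted $C$-rough geodesic property with a constant depending only on $X$. I would remark that the dependence on the basepoint and the choice of horoball has been eliminated by the initial normalization, which is what makes $C$ uniform.
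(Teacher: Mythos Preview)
Your proposal is correct and follows essentially the same approach as the paper: normalize by the isometry group so that $\mathcal B$ is a fixed horoball tangent to the boundary with $b$ at the top and $x_1$ on the vertical geodesic through $b$, then argue that $[x_1,x_2]$ enters $\mathcal B$ through the compact ``top'' of $\partial\mathcal B$, whose radius gives the constant $C$. The paper's proof is terser---it asserts the entry-through-the-top claim from the form of the Riemannian metric in horospherical coordinates and stops there---while you additionally spell out the detour/triangle-inequality computation showing $d(x_1,x_2)\ge L-2C$, which the paper leaves implicit.
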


\begin{figure}
\begin{minipage}[b]{0.48\linewidth}
\def\svgwidth{.95 \columnwidth}
\centering
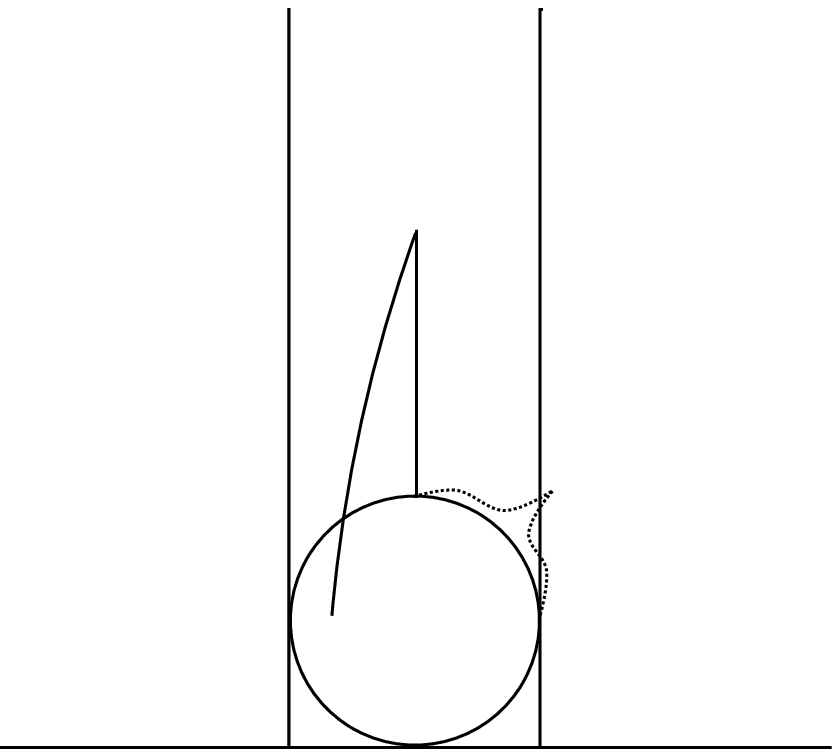
\caption{Lemma \ref{lemma:horoball1} 
for $X=\Hyp^2_\R$.\label{fig:horoball1}}
\label{fig:figure1}
\end{minipage}
\begin{minipage}[b]{0.48\linewidth}
\def\svgwidth{.95 \columnwidth}
\centering
 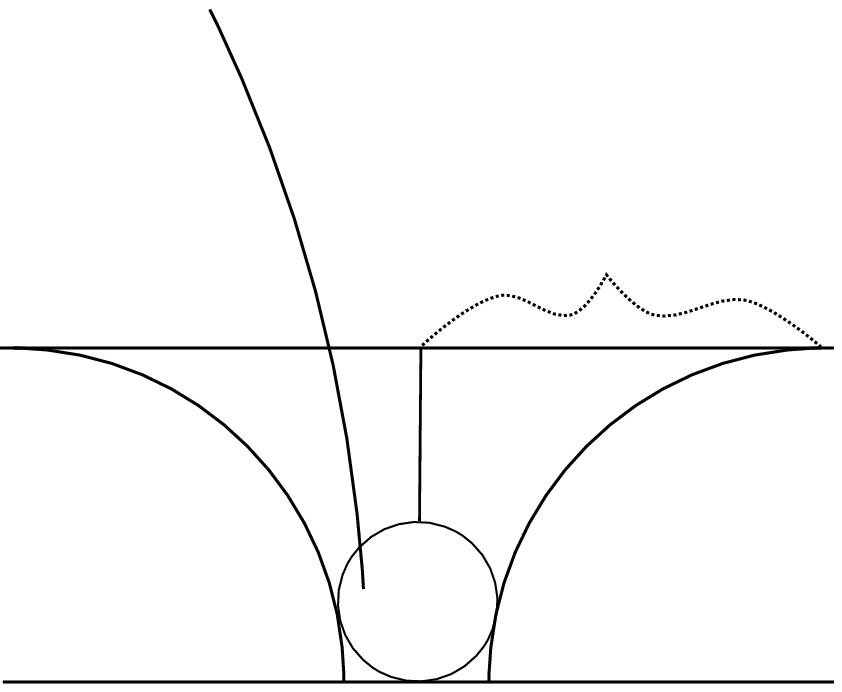
\caption{Lemma\ \ref{lemma:horoball2} for $X=\Hyp^2_\R$.
\label{fig:horoball2}}
\label{fig:figure2}
\end{minipage}
\end{figure}

\begin{lemma}
\label{lemma:horoball2}
Let $\mathcal B_1, \mathcal B_2$ be disjoint horoballs, and $x_1 \in \mathcal B_1, x_2 \in \mathcal B_2$. Let $[b_1, b_2]$ be the minimal geodesic between $\mathcal B_1$ and $\mathcal B_2$.  Then $[x_1, b_1] \cup [b_1, b_2] \cup [b_2, x_2]$ is a $C$-rough geodesic, for the value of $C$ in Lemma \ref{lemma:horoball1}.
\begin{proof}
The proof is analagous to that of Lemma \ref{lemma:horoball1}. We may normalize the horoballs $\mathcal B_1, \mathcal B_2$ as in Figure \ref{fig:horoball2}. The normalization depends only on the distance $d(\mathcal B_1,\mathcal B_2)$. Any geodesic $[x_1, x_2]$ must then pass through compact regions near $b_1$ and $b_2$.  Let $C(\mathcal B_1, \mathcal B_2)$ be the radius of this region in $\mathcal B_1$. Fixing $\mathcal B_1$ and varying $\mathcal B_2$, set $C = \sup C(\mathcal B_1, \mathcal B_2)$. The value $C(\mathcal B_1, \mathcal B_2)$ remains bounded if the distance between the horoballs goes to infinity (converging to the constant $C$ in Lemma \ref{lemma:horoball1}). Thus, the infimum is attained and $C<\infty$. This completes the proof.
\end{proof}
\end{lemma}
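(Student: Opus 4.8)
The plan is to mimic the proof of Lemma~\ref{lemma:horoball1}: reduce everything to the statement that the geodesic $[x_1,x_2]$ is forced to pass uniformly close to $b_1$ and to $b_2$, and then combine this with the minimality of $[b_1,b_2]$.

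First I would reduce to the case $p=x_1$, $q=x_2$ of the rough-geodesic inequality. Since horoballs are convex and $\mathcal B_1\cap\mathcal B_2=\emptyset$, the subarc $[x_1,b_1]$ lies in $\mathcal B_1$, the subarc $[b_2,x_2]$ lies in $\mathcal B_2$, the segment $[b_1,b_2]$ has interior disjoint from $\mathcal B_1\cup\mathcal B_2$, and (by an elementary argument using minimality) $b_1$, resp.\ $b_2$, is the nearest point of $\mathcal B_1$, resp.\ $\mathcal B_2$, to every point of $[b_1,b_2]$. Consequently, for any two points $p,q$ on the concatenation, the portion of it between them is either a genuine geodesic segment, or a concatenation of the type treated in Lemma~\ref{lemma:horoball1} (this occurs when exactly one of $p,q$ lies on $[b_1,b_2]$), or again a concatenation of the type in the present statement with $x_1,x_2$ replaced by $p,q$. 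Thus it suffices to prove $\bigl|d(x_1,x_2)-L\bigr|\le C$ where $L:=d(x_1,b_1)+d(b_1,b_2)+d(b_2,x_2)$; the inequality $d(x_1,x_2)\le L$ is immediate from the triangle inequality.

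Next, let $y_1$ be the point at which $[x_1,x_2]$ leaves $\mathcal B_1$ and $y_2$ the point at which it enters $\mathcal B_2$; by convexity of the horoballs and their disjointness these are well defined and occur in this order along $[x_1,x_2]$, so that
\begin{align}
d(x_1,x_2)=d(x_1,y_1)+d(y_1,y_2)+d(y_2,x_2),\qquad d(b_1,b_2)=d(\mathcal B_1,\mathcal B_2)\le d(y_1,y_2).
\end{align}
Feeding these into $L$ gives $L-d(x_1,x_2)\le d(y_1,b_1)+d(y_2,b_2)$, so the whole lemma comes down to bounding $d(y_i,b_i)$ by a uniform constant. To do this I would normalize by an isometry so that $\mathcal B_1,\mathcal B_2$ sit as in Figure~\ref{fig:horoball2} --- equivalently, so that $\mathcal B_2$ is a standard horoball $\sigma\times[e^{D},\infty)$ (with $D=d(\mathcal B_1,\mathcal B_2)$) and $b_1$ is the topmost point of $\mathcal B_1$ --- a normalization that depends only on $D$. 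Since $x_2$ then lies strictly above $\mathcal B_1$, the analysis of the Riemannian metric in horospherical coordinates used in the proof of Lemma~\ref{lemma:horoball1} shows that $[x_1,x_2]$ must cross $\partial\mathcal B_1$ inside the ``top'' of $\mathcal B_1$, whence $d(y_1,b_1)\le C(\mathcal B_1,\mathcal B_2)$, the radius of that top region about $b_1$; re-running the argument with the two horoballs interchanged bounds $d(y_2,b_2)$ in the same way.

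The step I expect to be the main obstacle is the uniformity of this constant: $C(\mathcal B_1,\mathcal B_2)$ depends, after normalization, on $D\in[0,\infty)$, and one must check that $\sup_{D}C(\mathcal B_1,\mathcal B_2)<\infty$. The way I would do this is to observe that $C(\mathcal B_1,\mathcal B_2)$ varies continuously with $D$ and that, as $D\to\infty$, the relevant compact region converges to the ``top of a horoball seen from a single ideal point'' --- exactly the region appearing in Lemma~\ref{lemma:horoball1}, and the one realizing the largest radius; hence the supremum is finite, and with a little care in the bookkeeping above the resulting rough-geodesic constant can be taken to be the constant $C$ of Lemma~\ref{lemma:horoball1}. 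The only genuinely geometric input, beyond convexity of horoballs, is the horospherical-coordinate claim that a geodesic running from inside a horoball to a point strictly above it must exit through the top --- the same fact invoked (without a fully detailed proof) in Lemma~\ref{lemma:horoball1} --- so it seems reasonable to treat it by the same picture.
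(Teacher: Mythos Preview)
Your proposal is correct and follows essentially the same approach as the paper's proof: normalize the pair of horoballs (up to the parameter $D=d(\mathcal B_1,\mathcal B_2)$), argue that the geodesic $[x_1,x_2]$ must pass through a compact ``top'' region near each $b_i$, and then obtain uniformity by letting $D\to\infty$ so that the radius $C(\mathcal B_1,\mathcal B_2)$ tends to the constant of Lemma~\ref{lemma:horoball1}. Your write-up is in fact more complete than the paper's sketch --- you explicitly reduce the rough-geodesic condition to the endpoint case and extract the clean inequality $L-d(x_1,x_2)\le d(y_1,b_1)+d(y_2,b_2)$ via the exit/entry points --- but the underlying argument is the same.
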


\begin{lemma}
\label{lemma:horoballs3}
Let $\mathcal B_1, \mathcal B_2$ be disjoint horoballs, $x_1 \in \mathcal B_1$, $x_2 \in \mathcal B_2$. Denote the minimal geodesic between $\mathcal B_1$ and $\mathcal B_2$ by $[b_1, b_2]$. Then $d(x_1, b_1) \leq d(x_1,x_2)$.
\begin{proof}
Fix $D>0$ and allow $\mathcal B_1, \mathcal B_2, x_1\in \mathcal B_1,$ and $x_2 \in \mathcal B_2$ to vary with the restriction $d(x_1, x_2) = D$. Define a function $f$ on the interval $[0,D]$ by
\[f(t) = \sup \{ d(x_1, b_1) \st d(\mathcal B_1, \mathcal B_2) = t\},\]
where the supremum is over all combinations of the variables with the restriction stated above, and $b_1$ denotes the closest point of $\mathcal B_1$ to $\mathcal B_2$. Then $f$ is a decreasing function, since increasing $t$ pushes the horoballs farther apart and forces $x_1$ closer to $x_2$. In particular, $f(D)=0$ since necessarily $x_1=b_1$. Conversely, $f(0)=D$, taking $x_2=b_1=b_2$. We then have for any choice of disjoint $\mathcal B_1, \mathcal B_2$ and $x_1, x_2$ in the corresponding horoballs, that 
\[d(x_1, b_1) \leq f(d(\mathcal B_1, \mathcal B_2) \leq d(x_1, x_2) = D,\]
as desired.
\end{proof}
\end{lemma}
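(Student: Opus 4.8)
The plan is to prove the inequality by a monotonicity argument that reduces it to two extremal configurations, using the homogeneity of $X$. Since the inequality is wanted for every $x_2 \in \mathcal B_2$, it suffices to establish the bound $d(x_1, b_1) \le d(x_1, \mathcal B_2)$. Put $D := d(x_1, x_2)$ and $t := d(\mathcal B_1, \mathcal B_2)$; since $x_1 \in \mathcal B_1$ and $x_2 \in \mathcal B_2$ we have $0 \le t \le d(x_1, \mathcal B_2) \le D$, so $t \in [0, D]$.

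First I would normalize the picture. By Convention~\ref{conv:X}, $\Isom(X)$ acts transitively on configurations consisting of a horoball together with a point at a prescribed distance from it, so --- exactly as in the proofs of Lemmas~\ref{lemma:horoball1} and~\ref{lemma:horoball2} --- any configuration with given $t$ and $D$ is isometric to a standard one: $\mathcal B_1$ a fixed horoball, based at infinity in horospherical coordinates; $\mathcal B_2$ the ball, of the size determined by $t$, tangent to the boundary directly below $b_1$; and $[b_1,b_2]$ the vertical segment joining their highest points. With $t$ and $D$ fixed, the constraint $d(x_1,x_2)=D$ confines $x_1$ to the compact set of points of $\mathcal B_1$ lying within distance $D$ of $\mathcal B_2$; hence
\[
f(t) \;:=\; \sup\bigl\{\, d(x_1,b_1) \st \mathcal B_i \text{ disjoint horoballs},\ x_i\in\mathcal B_i,\ d(\mathcal B_1,\mathcal B_2)=t,\ d(x_1,x_2)=D \,\bigr\}
\]
is finite and attained.

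Next I would pin down the endpoint values of $f$. At $t=D$ the chain $d(\mathcal B_1,\mathcal B_2)\le d(x_1,\mathcal B_2)\le d(x_1,x_2)$ collapses to equalities, forcing $x_1=b_1$, so $f(D)=0$. At $t=0$ the horoballs are tangent, $b_1=b_2$ is the point of tangency, and the choice $x_2=b_1=b_2$ realises $d(x_1,b_1)=d(x_1,x_2)=D$, which one checks to be best possible; so $f(0)=D$. Granted that $f$ is non-increasing on $[0,D]$, an arbitrary configuration then obeys $d(x_1,b_1)\le f(t)\le f(0)=D=d(x_1,x_2)$, which is the assertion.

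The monotonicity of $f$ is the heart of the argument, and the step I expect to be the main obstacle. The intuition is that increasing $t$ --- sliding $\mathcal B_2$ downward away from $\mathcal B_1$ along their common perpendicular while holding $d(x_1,x_2)=D$ fixed --- can only tighten the constraint: the point $x_2$ is driven lower, so $x_1$ is forced nearer the vertical geodesic through $b_1$, and $d(x_1,b_1)$ decreases. Making this rigorous requires care; I would take an extremal configuration for a value of $t$ and exhibit a deformation that raises $t$, preserves $d(x_1,x_2)=D$, and does not increase $d(x_1,b_1)$, either by a first-variation computation in horospherical coordinates or by a direct convexity estimate exploiting that $X$ is CAT$(-1)$ (so that the Busemann functions and the distance-to-horoball functions occurring here are convex along the relevant geodesics). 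As a consistency check, Lemma~\ref{lemma:horoball2} already yields the weaker bound $d(x_1,b_1)\le d(x_1,x_2)+C$, since $[x_1,b_1]\cup[b_1,b_2]\cup[b_2,x_2]$ is a $C$-rough geodesic; the monotonicity argument is what is needed to remove the error term $C$.
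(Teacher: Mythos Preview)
Your proof follows essentially the same route as the paper's: normalize using homogeneity, define the same auxiliary function $f(t)$ as a supremum over configurations with $d(x_1,x_2)=D$ and $d(\mathcal B_1,\mathcal B_2)=t$, compute its endpoint values $f(0)=D$ and $f(D)=0$, and invoke monotonicity to conclude $d(x_1,b_1)\le f(t)\le f(0)=D$. You are more explicit than the paper in flagging the monotonicity of $f$ as the crux and in sketching how one might justify it via a variation or convexity argument; the paper asserts this step with only a one-line intuitive remark.
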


\subsection{Truncated Spaces}
\begin{defi}
Let $X\ne \mathbb H^2_\R$ be a negatively curved rank one symmetric space. A \textit{truncated space} $\Omega$ is the complement in $X$ of a set of disjoint open horoballs. A truncated space is \textit{equivariant} if there is a (non-uniform) lattice $\Gamma \subset \Isom(X)$ that leaves $\Omega$ invariant, with $\Omega/\Gamma$ compact. 
\end{defi}

We will consider $\Omega$ with the induced path metric $d_\Omega$ from $X$. Under this metric, curvature remains negative in the interior of $\Omega$. The curvature on the boundary need not be negative. For an extensive treatment of truncated spaces, see \cite{schwartz96}.

\begin{remark}
Note that truncated spaces are, in general, not uniquely geodesic. Specifically, if $X$ is not a real hyperbolic space, then components of $\partial \Omega$ (which come from horospheres  in $X$) are isometrically embedded in $(\Omega, d_\Omega)$  copies of non-uniquely-geodesic Riemannian metrics on certain nilpotent groups. In particular, $(\Omega, d_\Omega)$  is not necessarily a $CAT(0)$-space.
\end{remark}

\begin{remark}
Let $X$ be a negatively curved rank one symmetric space and $\Gamma \subset \Isom(X)$ a non-uniform lattice. Then $X/\Gamma$ is a finite-volume manifold with cusps. In $X$, each cusp corresponds to a $\Gamma$-invariant family of horoballs. Removing the horoballs produces an equivariant truncated space $\Omega$ whose quotient $\Omega/\Gamma$ is the compact core of $X/\Gamma$.
\end{remark}

\begin{prop}
\label{prop:uniform}
Let $X$ be a negatively curved rank one symmetric space and $\Omega
\subset X$ an equivariant truncated space. Then the inclusion $\iota:
(\Omega, d_\Omega) \hookrightarrow (X, d_X)$ is a coarse embedding.
\end{prop}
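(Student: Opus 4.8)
The plan is to verify the two conditions equivalent to $\iota$ being a coarse embedding: that $\iota$ is coarsely Lipschitz, and that $\iota$ is \emph{uniformly proper}, i.e.\ that for every $R\ge 0$ there is $R'=R'(R)<\infty$ with $d_X(x,x')\le R\implies d_\Omega(x,x')\le R'$ for all $x,x'\in\Omega$. The first condition is immediate, since every path in $\Omega$ is a path in $X$, so $d_X(x,x')\le d_\Omega(x,x')$ and one may take $\omega(t)=t$ in $(*)$. Granting the second, one obtains the lower function $\alpha$ of $(*)$ as a non-decreasing modification of a generalized inverse of $R\mapsto R'(R)$; and since $R'(R)\ge R$ one automatically has $\alpha\le\omega$. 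So everything reduces to the uniform properness bound, which I would establish by rerouting ambient geodesics around the removed horoballs.

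The key preliminary is a consequence of homogeneity: there is a non-decreasing function $\Phi\colon[0,\infty)\to[0,\infty)$, depending only on $X$, such that any two points $p,q$ on a common horosphere $\sigma=\partial B$ of $X$ can be joined by a path \emph{inside} $\sigma$ of length at most $\Phi(d_X(p,q))$. As $\Isom(X)$ acts transitively on horospheres, it suffices to fix one horosphere $\sigma_0$ with its induced complete Riemannian path metric $d_{\sigma_0}$ and a point $p_0\in\sigma_0$, and to check that $\mathcal K_\rho:=\{q\in\sigma_0:d_X(p_0,q)\le\rho\}$ has finite $d_{\sigma_0}$-diameter for each $\rho$. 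But $\mathcal K_\rho=\sigma_0\cap\overline B_X(p_0,\rho)$ is closed and bounded in the proper space $X$, hence compact; since $d_{\sigma_0}$ induces on $\sigma_0$ its subspace topology from $X$, the continuous function $d_{\sigma_0}(p_0,\cdot)$ is bounded on $\mathcal K_\rho$, so $\mathcal K_\rho$ has finite $d_{\sigma_0}$-diameter, which I define to be $\Phi(\rho)$. I would also record that $\partial B\subseteq\Omega$ for every removed horoball $B$: distinct members of the defining family $\mathcal B$ are disjoint open horoballs, so $\overline B\cap B'=\emptyset$ for distinct $B,B'\in\mathcal B$ and $\partial B\cap B=\emptyset$, whence $\partial B$ misses every removed horoball. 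Finally, since $\mathcal B$ is $\Gamma$-invariant and locally finite and $\Omega/\Gamma$ is compact, I would fix a uniform separation $\delta_0>0$ with $d_X(B,B')\ge\delta_0$ for all distinct $B,B'\in\mathcal B$ (otherwise, translating midpoints of shortest connecting geodesics into a compact fundamental domain and passing to a limit would produce two distinct members of $\mathcal B$ meeting near an interior point of $\Omega$, contradicting disjointness and local finiteness).

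For the main estimate, fix $x,x'\in\Omega$, put $R=d_X(x,x')$, and let $\gamma=[x,x']$ be the $X$-geodesic, of length $R$. Since each horoball is convex and $\gamma$ is geodesic, $\gamma$ meets the union of the open horoballs in finitely many disjoint sub-arcs, the $j$-th $(j=1,\dots,k$, in order along $\gamma)$ having closure a sub-segment $[p_j,q_j]$ with $p_j,q_j\in\partial B_{i_j}$ and the horoballs $B_{i_1},\dots,B_{i_k}$ pairwise distinct. The portions of $\gamma$ outside all horoballs lie in $\Omega$ and have total length $\le R$; in particular the portion of $\gamma$ between $q_j$ and $p_{j+1}$ has length $\ge d_X(B_{i_j},B_{i_{j+1}})\ge\delta_0$, so $(k-1)\delta_0\le R$, i.e.\ $k\le 1+R/\delta_0$. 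Now replace each $[p_j,q_j]$ by a path inside $\partial B_{i_j}\subseteq\Omega$ of length at most $\Phi(d_X(p_j,q_j))\le\Phi(R)$, using that $[p_j,q_j]$ is a geodesic sub-segment of $\gamma$ so $d_X(p_j,q_j)\le R$. Concatenating these detours with the outside portions of $\gamma$ gives a path in $\Omega$ from $x$ to $x'$ of length at most $R+k\Phi(R)\le R+(1+R/\delta_0)\Phi(R)$. Thus $d_\Omega(x,x')\le R'(R):=R+(1+R/\delta_0)\Phi(R)$, which is the desired uniform properness bound, and together with the first paragraph this shows $\iota$ is a coarse embedding.

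The one substantive point — and the place where the rank-one symmetric-space geometry really enters — is the finiteness of $\Phi$, i.e.\ that moving a bounded $X$-distance never forces a long detour along a horosphere. This is exactly why $\iota$ is only a coarse, not a quasi-isometric, embedding: one expects $\Phi$ (hence the distortion) to be exponential, and if an explicit $\Phi$ is wanted one may substitute the standard formula for $d_X$ restricted to a horosphere in horospherical (Cygan) coordinates. The remaining ingredients — convexity of horoballs, the inclusion $\partial B\subseteq\Omega$, and the uniform separation $\delta_0$ coming from cocompactness of $\Omega/\Gamma$ — are routine.
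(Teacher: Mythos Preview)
Your proof is correct, but it takes a genuinely different route from the paper's. The paper gives a soft cocompactness argument: since $\Gamma$ acts on $\Omega$ by isometries for \emph{both} metrics $d_X$ and $d_\Omega$, one may define
\[
\beta(s)=\sup\{\,d_\Omega(x,y)\;:\;x\in K,\ y\in\Omega,\ d_X(x,y)\le s\,\}
\]
with $K\subset\Omega$ a compact fundamental domain; properness of $X$ and continuity of $d_\Omega$ make $\beta(s)<\infty$, and inverting (a monotone majorant of) $\beta$ yields the lower control function $\alpha$. No geometry of horoballs enters beyond the cocompact action.

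Your argument is instead constructive: you reroute an $X$-geodesic around each horoball it enters, using the horosphere-distortion function $\Phi$ and a uniform separation constant $\delta_0$ to bound the number of detours. This is exactly the ``more precise quantitative version'' the paper alludes to in the remark immediately following the proposition (citing \cite{wordprocessingingroups}); it yields the explicit estimate $d_\Omega\le R+(1+R/\delta_0)\Phi(R)$, which the paper's compactness proof does not. The trade-off is that you must supply the geometric ingredients: transitivity of $\Isom(X)$ on pointed horospheres for $\Phi$, convexity of horoballs for the finite decomposition of $\gamma$, and $\delta_0>0$ for the crossing count. One small caveat: your sketch for $\delta_0>0$ ends with ``contradicting disjointness,'' but disjointness of the \emph{open} horoballs does not by itself rule out tangent closures; what actually forces the contradiction is that equivariance plus local finiteness prevents infinitely many $\Gamma$-translates of a tangent horoball from accumulating along $\partial B$. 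This is standard for lattice truncations and easy to fill in, so the argument stands.
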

\begin{proof}
Since $d_\Omega$ and $d_X$ are path metrics with the same line element, we have
\begin{align}d_X(x,y) \leq d_\Omega(x,y)\end{align}
To get the lower bound, define an auxilliary function
\begin{align}\beta(s) = \max \left\{ d_\Omega(x,y) \mst x,y \in \Omega \text{ and } d_X(x,y) \leq s \right\}.\end{align}
Let $K$ be a compact fundamental region for the action of $\Gamma$ on $\Omega$. Because $\Gamma$ acts on $\Omega$ by isometries with respect to both metrics $d_X$ and $d_\Omega$, we may equivalently define $\beta(s)$ by
\begin{align}\beta(s) = \max \left\{ d_\Omega(x,y) \mst x \in K, y \in \Omega \text{ and } d_X(x,y) \leq s\right\}.\end{align}
Because $K$ is compact and the metrics $d_X$, $d_\Omega$ are complete, $\beta(s) \in (0,\infty)$ for $s \in (0,\infty)$. Furthermore, $\beta: [0, \infty] \rightarrow [0, \infty]$ is continuous and increasing, with $\beta(0)=0$. Because horospheres have infinite diameter for both $d_X$ and $d_\Omega$ (they are isometric to appropriate nilpotent Lie groups with left-invariant Riemannian metrics, see \cite{schwartz96}), we also have $\beta(\infty) = \infty$. 

Let $\beta'$ be an increasing homeomorphism of $[0,\infty]$ with $\beta'(s) \geq \beta(s)$ for all $s$ and consider its inverse $\alpha(t)$. For $x,y \in \Omega$ we then have
\begin{align*} &d_\Omega(x,y) \leq \beta\left( d_X(x,y) \right) \leq \beta'\left( d_X(x,y) \right),\\
&\alpha\left(d_\Omega(x,y) \right) \leq d_X(x,y).\end{align*}
This concludes the proof.
\end{proof}

\begin{remark}
A more precise quantitative version of Proposition \ref{prop:uniform} can be obtained by studying geodesics in $\Omega$, see \cite{wordprocessingingroups}.
\end{remark}

\subsection{Mappings between truncated spaces}
For this section, let $\Omega \subset X$ be a truncated space, with $X \neq \Hyp^2_\R$, and $f: \Omega \rightarrow \Omega$ a $d_\Omega$-quasi-isometric embedding. To ease the exposition, we refer to the target truncated space as $\Omega' \subset X'$.

\begin{lemma}[Schwartz \cite{schwartz96}]
\label{lemma:schwartz1}
There exists $C>0$ so that for every boundary horosphere $\sigma$ of $\Omega$, there exists a boundary horosphere $\sigma'$ of $\Omega'$ such that $f(\sigma)$ is contained in a $C$-neighborhood of $\sigma'$.
\end{lemma}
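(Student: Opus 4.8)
The plan is to follow Schwartz's original argument from \cite{schwartz96}, adapting it to the language of quasi-isometric embeddings rather than quasi-isometries. The key geometric input is that boundary horospheres of $\Omega$ are exactly the subsets of $\Omega$ that ``look like'' copies of nilpotent Lie groups — in particular they are undistorted in $(\Omega,d_\Omega)$ (Proposition \ref{prop:uniform} shows $\iota$ is a coarse embedding, so the intrinsic and extrinsic metrics on a horosphere are comparable) — and that the complement of a neighborhood of the horospheres is coarsely modeled on a negatively curved space. Since $f$ is a quasi-isometric embedding with constants $(\lambda,\epsilon)$, the image $f(\sigma)$ of a boundary horosphere $\sigma$ is a $(\lambda,\epsilon)$-quasi-isometrically embedded copy of a nilpotent group $N$ inside $\Omega'$.

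First I would recall from Schwartz that the horospheres, being horospheres of a negatively curved symmetric space, have a polynomial divergence / isoperimetric behaviour distinct from that of the ``thick part'' of $\Omega'$: a bi-infinite quasigeodesic in $\Omega'$ that stays in the thick part fellow-travels a geodesic of $X'$, which diverges exponentially, whereas within a horosphere divergence is at most polynomial. More precisely, one uses the fact that a horosphere $\sigma$ of dimension $\ge 2$ (this is where $X\ne\Hyp^2_\R$ is used — otherwise the horosphere is a line and the dichotomy fails) contains arbitrarily large ``coarsely flat'' pieces, or at least pieces on which pairs of points are joined by many coarsely-independent quasigeodesics, which cannot be accommodated in the thick part of $\Omega'$ without forcing the points close together. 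Consequently $f(\sigma)$ cannot project deeply and coarsely-coherently into the thick region: for each point $x\in\sigma$, the nearest boundary horosphere $\sigma'_x$ of $\Omega'$ to $f(x)$ satisfies $d(f(x),\sigma'_x)\le C_0$ for a uniform $C_0$.

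Next I would show the assignment $x\mapsto \sigma'_x$ is coarsely constant: if $x,y\in\sigma$ are close in $d_\Omega$ then $f(x),f(y)$ are close in $d_{\Omega'}$, and using Lemma \ref{lemma:horoball2} and Lemma \ref{lemma:horoballs3} — which say that geodesics between points of distinct horoballs must pass near the feet of the bridging geodesic, so distinct horoballs are ``coarsely far apart as seen from inside'' — one deduces that $f(x)$ and $f(y)$ being within bounded distance and within bounded distance of horospheres forces those horospheres to coincide once $d_\Omega(x,y)$ is below a threshold; then a chaining argument along the connected horosphere $\sigma$ (horospheres are path-connected, indeed quasiconvex-connected in $\Omega$) upgrades ``locally constant'' to ``globally constant'', yielding a single $\sigma'$ with $f(\sigma)\subseteq N_C(\sigma')$. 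A compactness argument over the finitely many $\Gamma$- and $\Gamma'$-orbits of horospheres, exactly as in the proof of Proposition \ref{prop:uniform}, makes $C$ uniform over all $\sigma$.

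The main obstacle is the first step: ruling out that $f(\sigma)$ wanders persistently into the thick part of $\Omega'$ or oscillates between several horospheres. This is the heart of Schwartz's rigidity theorem and requires the genuinely metric (not just coarse-topological) distinction between the sub-Riemannian/nilpotent geometry of horospheres and the Gromov-hyperbolic geometry of the ambient space — concretely, an argument that a quasi-isometrically embedded nilpotent group of growth $\ge 2$ cannot be coarsely contained in a neighborhood of a uniformly-quasiconvex-in-$X'$ subset unless it is trapped near a single horosphere. I would import this from \cite{schwartz96} rather than reprove it, citing the relevant divergence/coarse-topology estimates there; the only real work here is checking that Schwartz's statements, phrased for quasi-isometries, apply verbatim to one-sided quasi-isometric embeddings, which they do since his arguments only use the lower bound $d(f(x),f(y))\ge \tfrac1\lambda d(x,y)-\epsilon$ together with coarse Lipschitzness.
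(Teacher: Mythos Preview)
The paper does not give its own proof of this lemma: it is stated as a result of Schwartz with the citation \cite{schwartz96} and no argument is supplied. Your proposal therefore goes well beyond what the paper does, which is simply to quote the result.

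As a sketch of Schwartz's argument your outline is broadly faithful, and your observation that only the two-sided distortion estimates (not coarse surjectivity) are used is exactly the point one must check to import the lemma into the present setting. One caution: the ``polynomial versus exponential divergence'' heuristic you lead with is the right intuition, but it is not quite how Schwartz actually carries out the argument in \cite{schwartz96}; his proof proceeds via a more hands-on analysis of how quasi-geodesics in the neutered space interact with the boundary horospheres (the ``deep points'' and ``rising sequences'' machinery), rather than a clean growth-rate dichotomy. Your local-to-global step (the chaining over the connected horosphere) and the uniformity-by-cocompactness step are correct and match Schwartz. Since the paper is content to cite the result, your level of detail is already more than is required; if you want the sketch to be self-contained you would need to make the first step precise along Schwartz's lines rather than via divergence, but as an explanation of why the citation suffices your write-up is adequate.
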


Using nearest-point projection, we may assume $f(\sigma) \subset \sigma'$.

\begin{defi}
\label{defi:extension}
Let $\mathcal B, \mathcal B'$ be horoballs with boundaries $\sigma, \sigma'$. A point in $\sigma$ corresponds, in horospherical coordinates, to a geodesic ray in $B$. A map $\sigma \rightarrow \sigma'$  then extends to a map $\mathcal B \rightarrow \mathcal B'$ in the obvious fashion.

In view of Lemma \ref{lemma:schwartz1}, a $d_\Omega$-quasi-isometric embedding $f: \Omega \rightarrow \Omega'$ likewise extends to a map $f: X \rightarrow X'$ by filling the map on each boundary horoball.
\end{defi}

\begin{lemma}[Schwartz \cite{schwartz96}]
\label{lemma:schwartz2}
A quasi-isometry $f: \sigma \rightarrow \sigma'$ induces a quasi-isometry $\mathcal B \rightarrow \mathcal B'$, with uniform control on constants.
\begin{proof}[Idea of proof]
One considers the metric on the horospheres of $\mathcal B$ parallel to $\sigma$, or alternately fixes a model horosphere and varies the metric. One then shows that if $f$ is a quasi-isometry with respect to one of the horospheres, it is also a quasi-isometry with respect to the horospheres at other horo-heights. One then decomposes the metric on $\mathcal B$ into a sum of the horosphere metric and the standard metric on $\R$, in horospherical coordinates. This replacement is coarsely Lipschitz, so the extended map is also coarsely Lipschitz. Taking the inverse of $f$ completes the proof.	
\end{proof}
\end{lemma}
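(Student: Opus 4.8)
The plan is to reduce the lemma to a single coarse distance formula for a horoball in horospherical coordinates, together with the observation that the extension $\hat f\colon\mathcal B\to\mathcal B'$ of Definition~\ref{defi:extension} preserves horo-height. Write $a\stackrel{+}{\asymp}b$ for $|a-b|\le E$ with $E$ a constant depending only on the fixed symmetric space in question. Since $\Isom(X)$ acts transitively on the horoballs of $X$, after applying an isometry of $X$ and one of $X'$ I may assume $\mathcal B=\sigma\times[1,\infty)$ and $\mathcal B'=\sigma'\times[1,\infty)$ in horospherical coordinates, with $\sigma$ identified with $\sigma\times\{1\}$ and carrying its intrinsic (left-invariant nilpotent) metric $\rho$, and similarly for $\sigma'$; this normalization is what makes the resulting constants uniform over the boundary horoballs occurring in Definition~\ref{defi:extension}. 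With it, $\hat f$ is simply $\hat f(s,t)=(f(s),t)$.

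The technical heart is the coarse distance formula: for $x=(s_1,t_1)$ and $y=(s_2,t_2)$ in $\mathcal B$,
\begin{align}
d_{\mathcal B}(x,y)\ \stackrel{+}{\asymp}\ 2\log\max\bigl(t_1,t_2,\rho(s_1,s_2)\bigr)-\log t_1-\log t_2 .
\end{align}
To establish it one uses that horoballs in $X$ (which is negatively curved, hence $CAT(-1)$ after rescaling) are convex, so that $d_{\mathcal B}=d_X|_{\mathcal B}$, and that by thinness of geodesic triangles a geodesic of $\mathcal B$ from $x$ to $y$ lies within bounded Hausdorff distance of a ``tripod'' rising vertically from $(s_1,t_1)$ and $(s_2,t_2)$ to a common horo-height $h^{*}$, at which the two vertical fibers are within a universal distance of one another; one checks that the least such $h^{*}$ is comparable, up to a bounded factor, to $\max(t_1,t_2,\rho(s_1,s_2))$, while the two vertical legs have lengths $\log(h^{*}/t_1)$ and $\log(h^{*}/t_2)$. (Here one uses that the horosphere at height $t$ is a rescaled copy of $\sigma$; in the non-real-hyperbolic cases it is convenient that the intrinsic Riemannian metric and the Carnot metric of a horosphere are quasi-isometric, so it does not matter which is taken for $\rho$.) The same formula holds verbatim in $\mathcal B'$.

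Given the formula, the conclusion follows quickly. Since $\hat f(s_i,t_i)=(f(s_i),t_i)$, the terms $-\log t_1-\log t_2$ are reproduced exactly, and the only change is that $\rho(s_1,s_2)$ gets replaced by $\rho'(f(s_1),f(s_2))$, which lies in $[\tfrac1\lambda\rho(s_1,s_2)-\epsilon,\ \lambda\rho(s_1,s_2)+\epsilon]$ where $(\lambda,\epsilon)$ are the quasi-isometry constants of $f$. A short case split according to whether $\rho(s_1,s_2)$ is small or large relative to $\lambda,\epsilon$ (using $t_i\ge1$) shows that $\bigl|\log\max(t_1,t_2,\rho'(f(s_1),f(s_2)))-\log\max(t_1,t_2,\rho(s_1,s_2))\bigr|$ is bounded in terms of $\lambda,\epsilon$ alone. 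Hence $|d_{\mathcal B'}(\hat f(x),\hat f(y))-d_{\mathcal B}(x,y)|$ is bounded in terms of $\lambda,\epsilon,X,X'$, so $\hat f$ is a $(1,D)$-quasi-isometric embedding with $D=D(\lambda,\epsilon,X,X')$. For coarse surjectivity: given $(s',t')\in\mathcal B'$ with $t'\ge1$, choose $s\in\sigma$ with $\rho'(f(s),s')$ bounded (coarse surjectivity of $f$); then $\hat f(s,t')=(f(s),t')$, and the formula in $\mathcal B'$ bounds $d_{\mathcal B'}\bigl((f(s),t'),(s',t')\bigr)$ by a constant. Therefore $\hat f$ is a quasi-isometry, with all constants controlled by $\lambda,\epsilon$ and the fixed spaces $X,X'$, which is the assertion. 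This also makes precise the ``decompose the metric on $\mathcal B$ into the horosphere metric plus the $\R$-metric, then invert $f$'' step of the sketch.

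The step I expect to be the main obstacle is proving the coarse distance formula with an additive error $E$ that is uniform across all rank-one symmetric spaces $X\ne\Hyp^2_\R$ (the real, complex and quaternionic hyperbolic spaces and the octonionic plane) and across all horoballs. This rests on the homogeneity of the nilpotent horosphere metric under the horospherical dilations and on uniform thinness of geodesic triangles after normalizing so that $X$ is $CAT(-1)$; it is genuinely a folklore computation, and once it is in place the rest of the argument is routine bookkeeping.
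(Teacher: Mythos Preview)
Your argument is correct and, compared with the paper's sketch, it takes a more explicit and slightly sharper route. The paper's ``idea of proof'' shows the extension is coarsely Lipschitz by decomposing the horoball metric into a horospherical part plus the $\R$-direction, and then obtains the co-Lipschitz bound by applying the same reasoning to a coarse inverse of $f$. You instead prove a single coarse distance formula
\[
d_{\mathcal B}\bigl((s_1,t_1),(s_2,t_2)\bigr)\ \stackrel{+}{\asymp}\ 2\log\max\bigl(t_1,t_2,\rho(s_1,s_2)\bigr)-\log t_1-\log t_2
\]
and verify directly that $\hat f$ preserves it up to an additive constant. This bypasses the separate Lipschitz/co-Lipschitz steps and yields the stronger conclusion that $\hat f$ is a $(1,D)$-quasi-isometry rather than merely a $(\lambda',\epsilon')$-quasi-isometry; this ``almost-isometry'' phenomenon for horoball extensions is implicit in Schwartz's work. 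Both approaches rest on the same underlying geometry (the tripod picture for geodesics in a $CAT(-1)$ horoball and the homogeneous scaling of the horospherical metric), so the difference is one of packaging: the paper's sketch is qualitative and uses the inverse, yours is quantitative and symmetric. Your caveat about the additive constant $E$ is well placed: it depends on the fixed ambient spaces $X,X'$ (via the hyperbolicity constant and the quasi-isometry between the Riemannian and Carnot horospherical metrics), not literally uniformly over all rank-one spaces, but since $X,X'$ are fixed throughout this is exactly what is needed for the ``uniform control on constants'' in the statement.
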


\section{Compactly Supported Cohomology}

\begin{defi}
Let $X$ be a simplicial complex and $K_i \subset X$ nested compacts with $\cup_i K_i = X$. Compactly supported cohomology $H^*_c(X)$ 	is defined by
\begin{align} H^*_c(X) = \varinjlim H^*(X, X \backslash K_i).
\end{align}
\end{defi}	

For a compact space $X$, $H^*_C(X)=H^*(X)$ but the two do not generally agree for unbounded spaces. We have $H^n_c(\R^n) = \mathbb Z$ and $H^n_c(\overline{\Omega})=0$ for a non-trivial truncated space $\Omega$. In fact, one has the following lemma.

\begin{lemma}
\label{lemma:everything}
Let $Z \subset \R^n$ be a closed subset. Then $H^n_c(Z) \neq 0$ if and only if $Z=\R^n$.
\begin{proof}
It is well-known that the choice of nested compact sets does not affect $H^n_c(Z)$. Choose the sequence $K_i = \overline{B(0,i)}\cap Z$, the intersection of a closed ball and $Z$.  With respect to the subset topology of $Z$, the boundary of $K_i$ is given by $\partial_ZK_i := \partial K_i \cap \partial \overline{B(0,i)}$. We have by excision 
$$H^n(Z, K_i) = H^n(K_i, \partial_Z K_i) = \widetilde H^n(K_i/\partial_Z K_i).$$
Note that $K_i \subset \overline{B(0,i)}$ and $\partial_Z K_i \subset \partial \overline{B(0,i)}$, so $K_i/\partial_Z K_i \subset \overline{B(0,i)}/\partial\overline{B(0,i)}$. Thus, if $K_i \neq B(0,i)$, then $K_i/\partial_ZK_i \subset S^n\backslash\{*\}$. That is, $K_i/\partial_ZK_i$ is a compact set in $\R^n$, and $\tilde H^n(K_i/\partial_ZK_i)=0$. Thus, if $Z=\R^n$, we have $H^n_c(Z)=\Z$. Otherwise, $H^n_c(Z)=0$.
\end{proof}
\end{lemma}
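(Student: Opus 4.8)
The statement is an ``if and only if,'' and the two directions have very different flavors: the ``if'' direction is the standard computation $H^n_c(\R^n)\cong\Z$, while the ``only if'' direction --- equivalently its contrapositive, that $Z\subsetneq\R^n$ forces $H^n_c(Z)=0$ --- is the real content. Throughout I would use that $H^n_c(Z)=\varinjlim_i H^n(Z,Z\setminus K_i)$ for \emph{any} exhaustion of $Z$ by compact sets $K_1\subset K_2\subset\cdots$, the limit being independent of the chosen exhaustion. For the ``if'' direction, with $Z=\R^n$ take $K_i=\overline{B(0,i)}$; the long exact sequence of the pair, together with the fact that $\R^n\setminus K_i$ is homotopy equivalent to $S^{n-1}$, identifies each term with $\widetilde H^{n-1}(S^{n-1})\cong\Z$ and the structure maps with the identity, so $H^n_c(\R^n)\cong\Z\neq 0$.

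For the contrapositive, suppose $Z\subsetneq\R^n$ and fix $p\in\R^n\setminus Z$; since $Z$ is closed there is an open ball $B(p,\varepsilon)$ disjoint from $Z$. I would use the exhaustion $K_i=\overline{B(0,i)}\cap Z$. With respect to the subspace topology on $Z$, the frontier of $K_i$ is contained in the sphere $S_i=\partial\overline{B(0,i)}$ (a point of $K_i$ lying strictly inside the ball is an interior point of $K_i$ in $Z$); write $\partial_Z K_i=K_i\cap S_i$. The crucial step is the identification
\[
H^n(Z,Z\setminus K_i)\;\cong\;H^n(K_i,\partial_Z K_i)\;\cong\;\widetilde H^n\!\big(K_i/\partial_Z K_i\big),
\]
obtained by excising the complement of a neighborhood of $K_i$ and then collapsing $\partial_Z K_i$.

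Now the inclusion of pairs $(K_i,\partial_Z K_i)\hookrightarrow(\overline{B(0,i)},S_i)$ descends to an embedding of quotient spaces $K_i/\partial_Z K_i\hookrightarrow\overline{B(0,i)}/S_i\cong S^n$. As soon as $i>|p|+\varepsilon$ we have $B(p,\varepsilon)\subset B(0,i)$ and $B(p,\varepsilon)\cap Z=\emptyset$, so the image of $K_i/\partial_Z K_i$ in $S^n$ misses the nonempty open image of $B(p,\varepsilon)$; in particular $K_i/\partial_Z K_i$ is homeomorphic to a compact subset of $S^n$ minus a point, i.e.\ of $\R^n$. Finally, a compact subset $A\subsetneq\R^n$ has $\widetilde H^n(A)=0$ --- for instance by Alexander duality, $\widetilde H^n(A)\cong\widetilde H_{-1}(S^n\setminus A)=0$ since $S^n\setminus A\neq\emptyset$. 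Therefore $H^n(Z,Z\setminus K_i)=0$ for all large $i$, and passing to the direct limit gives $H^n_c(Z)=0$.

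The step I expect to require the most care is the excision-and-collapse identification of $H^n(Z,Z\setminus K_i)$ with $\widetilde H^n(K_i/\partial_Z K_i)$: for a general closed set $Z$ the pair $(K_i,\partial_Z K_i)$ need not be ``good,'' so one should either restrict to the situation of interest (where $Z$ is a truncated space, hence a manifold with corners and the pair is good) or work with \v{C}ech cohomology, and one may need to perturb the radius $i$ slightly so that $S_i$ meets $Z$ in a tame set. One should also dispose of the degenerate case in which $Z$ is bounded: then $K_i=Z$ and $\partial_Z K_i=\emptyset$ for large $i$, so $K_i/\partial_Z K_i=Z\sqcup\{*\}$ and again $\widetilde H^n(Z)=0$ because $Z$ is a compact proper subset of $\R^n$. (An alternative route avoiding the quotient entirely is the ``extension by zero'' long exact sequence $\cdots\to H^n_c(V)\to H^n_c(\R^n)\to H^n_c(Z)\to H^{n+1}_c(V)\to\cdots$ for $V=\R^n\setminus Z$: here $H^{n+1}_c(V)=0$ since $V$ is an $n$-manifold, and when $V\neq\emptyset$ the map $H^n_c(V)\to H^n_c(\R^n)\cong\Z$ is onto because a bump class supported in a small ball of $V$ extends to a generator, forcing $H^n_c(Z)=0$.)
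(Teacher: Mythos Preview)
Your argument is correct and follows essentially the same route as the paper: the same exhaustion $K_i=\overline{B(0,i)}\cap Z$, the same excision-and-collapse identification $H^n(Z,Z\setminus K_i)\cong \widetilde H^n(K_i/\partial_Z K_i)$, and the same embedding of the quotient into $\overline{B(0,i)}/S_i\cong S^n$ to conclude that the quotient sits in $\R^n$ and hence has vanishing top reduced cohomology. You supply more justification than the paper does --- the Alexander duality argument for $\widetilde H^n(A)=0$, the explicit missing ball around $p$, the bounded-$Z$ degenerate case, and the caveat about ``good pairs'' --- and your alternative via the extension-by-zero sequence is a genuinely different (and cleaner) proof, but the main line is the paper's own.
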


Compactly supported cohomology is not invariant under quasi-isometries or uniform embeddings. The remainder of this section is distilled from \cite{kapovich-kleiner2005}, where compactly supported cohomology is generalized to a theory invariant under uniform embeddings. For our purposes, the basic ideas of this theory, made explicit below, are sufficient.

\begin{defi}
Let $X$ be a simplicial complex with the standard metric assigning each edge length $1$. Recall that  a \textit{chain} in $X$ is a formal linear combination of simplices. The \textit{support} of a chain is the union of the simplices that have non-zero coefficients in the chain. The \textit{diameter} of a chain is the diameter of its support. 

An acyclic metric simplicial complex $X$ is $k$\textit{-uniformly acyclic} if there exists a function $\alpha$ such that any closed chain with diameter $d$ is the boundary of a $k+1$-chain of diameter at most $\alpha(d)$. If $X$ is $k$-uniformly acyclic for all $k$, we say that it is \textit{uniformly acyclic}.

Likewise, we say that a metric simplicial complex $X$ is $k$\textit{-uniformly contractible} if there exists a function $\alpha$ such that every continuous map $S^k \rightarrow X$ with image having diameter $d$ extends to a map $B^{k+1} \rightarrow X$ with diameter at most $\alpha(d)$. If $X$ is $k$-uniformly contractible for all $k$, we say it is \textit{uniformly contractible}.
\end{defi}

\begin{remark}
Rank one symmetric spaces and nilpotent Lie groups (with left-invariant Riemannian metrics) are uniformly contractible and uniformly acyclic.
\end{remark}

\begin{lemma}
\label{lemma:approx}
Let $X$, $Y$ be uniformly contractible and geometrically finite metric simplicial complexes and $f: X \rightarrow Y$ a uniform embedding. Then there exists an iterated barycentric subdivision of $X$ and $R>0$ depending only on the uniformity constants of $f, X,$ and $Y$ such that $f$ is approximated by a continuous simplicial map with additive error of at most $R$.
\begin{proof}
We first approximate $f$ by a continuous (but not simplicial) map by working on the skeleta of $X$. Starting with the $0$-skeleton, adjust the image of each vertex by distance at most $1$ so that the image of each vertex of $X$ is a vertex of $Y$. Next, assuming inductively that $f$ is continuous on each $k$-simplex of $X$, we now extend to the $k+1$ skeleton using the uniform contractibility of $Y$. Since error was bounded on the $k$-simplices, it remains bounded on the $k+1$-skeleton.

Now that $f$ has been approximated by a continous map, a standard simplicial approximation theorem replaces $f$ by a continuous simplicial map, with bounded error depending only on the geometry of $X$ and $Y$ (see for example the proof of Theorem 2C.1 of \cite{hatcher}).
\end{proof}
\end{lemma}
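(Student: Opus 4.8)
The plan is to argue in two stages: first replace the uniform embedding $f$ by an honestly continuous map $\bar f\colon X\to Y$ lying at bounded distance from $f$, and then feed $\bar f$ into a quantitative form of the classical simplicial approximation theorem.

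For the first stage I would build $\bar f$ by induction over the skeleta of $X$, as in the sketch, but tracking moduli of continuity rather than merely diameters. Since $d_X$ is a path metric the upper control function of $f$ may be taken affine, $\omega(t)=at+b$; since $Y$ is geometrically finite every simplex of $Y$ has diameter $\le D_Y$; and $\dim X=:N<\infty$ with every simplex of $X$ having diameter $\le D_X$. On $X^{(0)}$ send each vertex $v$ to a nearest vertex $\bar f(v)$ of $Y$, so that $d_Y(\bar f(v),f(v))\le D_Y$. Inductively, given $\bar f$ continuous on $X^{(k)}$ with $d_Y(\bar f,f)\le R_k$ and a uniform modulus of continuity there, extend over each $(k{+}1)$-simplex $\Delta$ of $X$: the restriction $\bar f|_{\partial\Delta}$ is a map of a $k$-sphere with image of diameter $\le\omega(D_X)+2R_k$, so uniform contractibility of $Y$ fills it by a $(k{+}1)$-disk of controlled diameter, and --- carrying out this filling by one fixed procedure that depends only on the (uniformly bounded) boundary data --- one obtains an extension with a distance-to-$f$ bound $R_{k+1}$ and a modulus of continuity controlled by $R_k$, $D_X$, $D_Y$ and the contractibility functions. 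After $N$ steps this produces a continuous $\bar f\colon X\to Y$ with a uniform modulus of continuity and $\sup_x d_Y(\bar f(x),f(x))\le R'$, all depending only on the uniformity constants of $f$, $X$, $Y$.

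For the second stage, geometric finiteness of $Y$ yields $\lambda_Y>0$ such that every subset of $Y$ of diameter $<\lambda_Y$ lies in the open star of some vertex of $Y$. Using the uniform modulus of continuity of $\bar f$ there is a controlled $\delta>0$ so that $\bar f$ carries every $\delta$-ball of $X$ into some open star of $Y$; hence $\mathcal U=\{\bar f^{-1}(\mathrm{st}\,w)\}_{w\in Y^{(0)}}$ is an open cover of $X$ with Lebesgue number at least $\delta$. After $m$ iterated barycentric subdivisions the mesh of $X$ is at most $D_X(N/(N{+}1))^m$, so choosing $m$ with $D_X(N/(N{+}1))^m<\delta$ --- depending only on $\delta$, $D_X$, $N$ --- makes the open-star cover of $X^{(m)}$ refine $\mathcal U$, which is precisely the input required by the simplicial approximation theorem (Hatcher~\cite{hatcher}, Theorem~2C.1). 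That theorem returns a continuous simplicial map $g\colon X^{(m)}\to Y$ such that for every $x$ the points $g(x)$ and $\bar f(x)$ lie in a common simplex of $Y$; hence $d_Y(g(x),\bar f(x))\le D_Y$ and therefore $d_Y(g(x),f(x))\le R:=R'+D_Y$. This $g$, together with the subdivision $X^{(m)}$ and the constant $R$, is what the lemma asserts.

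The step I expect to require genuine care is the modulus-of-continuity bookkeeping of the first stage: one must choose the disk fillings produced by uniform contractibility of $Y$ so that $\bar f$ ends up with a modulus of continuity --- equivalently, so that the cover $\mathcal U$ has a Lebesgue number --- controlled purely by the uniformly bounded boundary data and the geometry of $X$ and $Y$, since it is exactly this control that forces the number of subdivisions $m$, and hence the error $R$, to be uniform. The rest is the routine quantitative book-keeping that the hypothesis ``geometrically finite'' is designed to make possible.
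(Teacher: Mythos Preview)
Your proposal is correct and follows essentially the same two-stage route as the paper: build a continuous approximation by induction over the skeleta using uniform contractibility of $Y$, then invoke the simplicial approximation theorem from \cite{hatcher}. You are in fact more careful than the paper's sketch in tracking the modulus of continuity needed to control the Lebesgue number (and hence the subdivision depth $m$), a point the paper leaves implicit.
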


\begin{lemma}
\label{lemma:coho}
Let $X$ and $Y$ be uniformly acyclic simplicial complexes and $f: X \rightarrow Y$ a uniform embedding. Suppose furthermore that $f$ is a continuous simplicial map. Then if $H^n_c(X)\cong H^n_c(f X)$.
\begin{proof}
We first consturct a left inverse $\rho$ of the map $f_*: C_*(X) \rightarrow C_*(fX)$ induced by $f$ on the chain complex of $X$, up to a chain homotopy $P$. That is, $P$ will be a map $C_*(X) \rightarrow C_{*+1}(X)$ satisfying, for each $c\in C_*(X)$, the homotopy condition
\begin{align}\label{fla:homotopy}\partial P c = c - \rho f_* c - P \partial c\end{align}
and furthermore with diameter of $Pc$ controlled uniformly by the diameter of $c$.

We start with the $0$-skeleton. Each vertex $v'\in fX$ is the image of some vertex $v \in X$ (not necessarily unique). Set $\rho(v') = v$, and extend by linearity to $\rho: C_0(fX) \rightarrow C_0(X)$. To define $P$, let $v$ be an arbitrary vertex in $X$ and note that $\partial v=0$. We have to satisfy $\partial P v = v - \rho f_*v$. Since $X$ is acyclic, there exists a 1-chain $Pv$ satisfying this condition. Furthermore, note that $\rho f_*v$ is, by construction, a vertex such that $f(\rho f_*v) = f(v)$. Since $f$ is a uniform embedding, $d(\rho f_*v, v)$ is uniformly bounded above. Thus, $Pv$ may be chosen using \textit{uniform} acyclicity so that its diameter is also uniformly bounded above.

Assume next that $\rho$ and $P$ are defined for all $i<k$ with uniform control on diameters. Let $\sigma$ be a $k$-simplex in $X$. Then $\partial \rho f_*\sigma$ is a chain in $X$  whose diameter is bounded independently of $\sigma$. Then, by uniform acyclicity there is a chain $\sigma'$ with $\partial \sigma' = \partial \rho f_*\sigma$. We define $\rho(\sigma)= \sigma'$. As before, we need to link $\sigma'$ back to $\sigma$. We have
\begin{align} \partial ( \sigma - \sigma' - P\partial \sigma) = \partial \sigma - \rho f_* \partial \sigma - \partial P \partial \sigma.\end{align}
By the homotopy condition \ref{fla:homotopy}, we further have
\begin{align} \partial (\sigma - \sigma' - P\partial \sigma) = \partial \sigma - \rho f_* \partial  \sigma - ( \partial \sigma - \rho f_* \partial \sigma - P\partial \partial \sigma)=0.\end{align}
Thus, by bounded acyclicity there is a $k+1$ chain $P\sigma$ such that 
\begin{align}\partial P\sigma  = \sigma -\sigma' - P\partial \sigma,\end{align} as desired. We extend both $\rho$ and $P$ by linearity to all of $C_k(fX)$ and $C_k(X)$, respectively.

To conclude the argument, let $K$ be a compact subcomplex of $X$ and consider the complex $X/(X\backslash K) = K/\partial K$. The maps $P$ and $\rho \circ f_*$ on $C_*(X)$ induce maps on $C_*(K/\partial K)$, and the condition $\partial P c + P \partial c = c - \rho f_* c$ remains true for the induced maps and chains.

Because chain-homotopic maps on $C_*$ induce the same maps on homology, we have, for $h \in H_*(K/\partial K)$, $h = \rho f_* h$. Conversely, $f_*\rho$ is the identity on cell complexes, so still the identity on homology. Thus, $H_*(K/\partial K) \cong H_*(fK/\partial fK)$. By duality, $H^*(fK/\partial fK) \cong H^*(K/\partial K)$.

Taking $K_i$ to be an exhaustion of $X$ by compact subcomplexes and taking a direct limit, we conclude that $H^*_c(X) \cong H^*_c(fX)$.
\end{proof}
\end{lemma}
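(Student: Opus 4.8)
The plan is to transfer the computation of compactly supported cohomology from $X$ to its image $fX$ by producing, at the level of simplicial chains, a \emph{coarse retraction} of $fX$ back onto $X$ all of whose constituent maps move chains by a uniformly bounded amount, and then restricting everything to the relative chain complexes of a compact exhaustion of $X$. Since $f$ is simplicial, it induces a chain map $f_*\colon C_*(X)\to C_*(fX)$ (passing to a barycentric subdivision as in Lemma~\ref{lemma:approx} we may assume $f$ is non-degenerate on simplices, so that $fX$ is a genuine subcomplex of $Y$). I would construct a chain map $\rho\colon C_*(fX)\to C_*(X)$ and a chain homotopy $P\colon C_*(X)\to C_{*+1}(X)$ with
\[ \partial P c + P \partial c \;=\; c - \rho f_* c \qquad \text{for all } c \in C_*(X), \]
with $f_*\rho=\mathrm{id}_{C_*(fX)}$ (arranged by choosing $\rho$ via a coherent selection of simplicial preimages), and with the crucial extra property that $\operatorname{diam}(Pc)$ and $\operatorname{diam}(\rho c)$ are bounded by a function of $\operatorname{diam}(c)$ that is independent of the chain $c$.

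The construction of $\rho$ and $P$ proceeds by induction on skeleta, invoking \emph{uniform} acyclicity of $X$ (and of $fX$, which one derives from that of $Y$) at each stage. On the $0$-skeleton, for each vertex $v'\in fX$ pick a vertex $\rho(v')\in X$ with $f(\rho(v'))=v'$; since $f$ is a uniform embedding, $d(v,\rho f_* v)$ is uniformly bounded over all vertices $v$ of $X$, so uniform acyclicity yields a $1$-chain $Pv$ of uniformly bounded diameter with $\partial P v = v-\rho f_* v$. For the inductive step, given a $k$-simplex $\sigma$, the chain $\rho f_*\partial\sigma$ is a cycle whose diameter is bounded in terms of $\operatorname{diam}\partial\sigma\le 1$ (using the bounds already established on lower skeleta), so by uniform acyclicity it bounds a $k$-chain of bounded diameter which we declare to be $\rho(f_*\sigma)$; then $\sigma-\rho f_*\sigma-P\partial\sigma$ is a cycle of bounded diameter, and uniform acyclicity produces a $(k{+}1)$-chain $P\sigma$ of bounded diameter with $\partial P\sigma=\sigma-\rho f_*\sigma-P\partial\sigma$. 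Extending $\rho$ and $P$ by linearity finishes the construction.

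To conclude, let $K_1\subset K_2\subset\cdots$ exhaust $X$ by compact subcomplexes. Because $f_*$, $\rho$ and $P$ displace chains by a bounded amount, each of them carries a chain supported in $K_i$ into one supported in $K_{j(i)}$ for some $j(i)$ depending only on $i$; hence in the direct limit they induce maps on the relative chain complexes $C_*(X)/C_*(X\setminus K_i)$, i.e.\ on the quotient complexes $X/(X\setminus K_i)=K_i/\partial K_i$, and the homotopy identity descends. Therefore $f_*$ and $\rho$ induce mutually inverse isomorphisms $\widetilde H_*(K_i/\partial K_i)\cong\widetilde H_*(fK_i/\partial fK_i)$ compatibly with the maps of the two direct systems; dualizing (these are finite complexes, so the universal coefficient theorem applies) and passing to the direct limit gives $H^*_c(X)\cong H^*_c(fX)$, in particular in degree $n$.

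The one place the hypotheses genuinely enter, and the main obstacle, is the uniform control on diameters throughout the skeletal induction: plain acyclicity of $X$ would still produce $\rho$ and $P$, but without any diameter bound they would fail to descend to the relative complexes $C_*(K_i/\partial K_i)$, and $H^*_c$ could not be computed this way. Carrying the uniformity functions correctly through the induction, verifying that a uniform embedding forces $d(v,\rho f_* v)$ to be uniformly bounded, and checking the $f_*\rho=\mathrm{id}$ bookkeeping, is the real work; this is exactly the input imported from \cite{kapovich-kleiner2005}.
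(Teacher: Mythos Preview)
Your proposal is correct and follows essentially the same approach as the paper: build $\rho$ and the chain homotopy $P$ by induction on skeleta using uniform acyclicity to control diameters, then pass to the relative complexes $K_i/\partial K_i$ and take the direct limit. The only quibble is your claim that one can arrange $f_*\rho=\mathrm{id}_{C_*(fX)}$ exactly ``via a coherent selection of simplicial preimages''; your own inductive construction (filling the cycle $\rho f_*\partial\sigma$ by uniform acyclicity rather than selecting a preimage simplex) does not give this, and the paper glosses over the same point---what is actually obtained (and is enough) is that $f_*\rho$ is chain homotopic to the identity by a symmetric construction on $C_*(fX)$.
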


\begin{cor}
\label{cor:isomorphism}
Let $X$ and $Y$ be uniformly acyclic simplicial complexes and $f: X \rightarrow Y$ a uniform embedding. There exists an $R>0$ depending only on the uniformity constants of $f, X$, and $Y$ so that $H^n_c( N_R(fX)) \cong H^n_c(X)$. 
\begin{proof} Lemma \ref{lemma:approx} approximates $f$ by a continuous simplicial map, within uniform additive error. Lemma \ref{lemma:coho} shows that the resulting approximation induces an isomorphism on compactly supported cohomology.
\end{proof}
\end{cor}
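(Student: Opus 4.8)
The plan is to reduce the statement to Lemma~\ref{lemma:coho} by way of Lemma~\ref{lemma:approx}, letting the thickening $N_R(\cdot)$ absorb the additive error introduced by the simplicial approximation.

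First I would apply Lemma~\ref{lemma:approx} to $f\colon X\to Y$: after passing to an iterated barycentric subdivision of $X$ — which does not change the underlying space (hence leaves $H^n_c(X)$ untouched) and, since $X$ is geometrically finite, alters its uniformity constants only by a controlled amount — we obtain a continuous simplicial map $g\colon X\to Y$ and a constant $R_1>0$, depending only on the uniformity constants of $f$, $X$, $Y$, with $d_Y(f(x),g(x))\le R_1$ for all $x\in X$. Since $g$ stays within distance $R_1$ of the uniform embedding $f$, the map $g$ is itself a uniform embedding, with distortion functions replaced by $\alpha(t)-2R_1$ and $\omega(t)+2R_1$; in particular $g$ meets the hypotheses of Lemma~\ref{lemma:coho}, which gives
\[
H^n_c(X)\ \cong\ H^n_c(gX).
\]

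Next I would set $R:=R_1$ and compare $gX$ with $W:=N_R(fX)$. From $d_Y(f(x),g(x))\le R_1$ one gets $gX\subseteq N_{R_1}(fX)=W$ and, conversely, $W\subseteq N_{2R_1}(gX)$, so $gX$ and $W$ lie within Hausdorff distance $2R_1$. The nearest-point retraction $W\to gX$ and the inclusion $gX\hookrightarrow W$ are then mutually coarsely inverse coarse maps displacing points a uniformly bounded amount, so rerunning the chain-level retraction argument of Lemma~\ref{lemma:coho} for this pair (this time for the closed inclusion of $gX$ into a controlled triangulation of $W$) yields $H^n_c(gX)\cong H^n_c(W)$; combined with the display above this proves the corollary, with $R$ depending only on $R_1$ and hence only on the uniformity constants of $f$, $X$, $Y$.

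I expect the last step to be the main obstacle: one must justify that replacing the uniformly embedded image $fX$ (equivalently, the approximant $gX$) by a bounded neighborhood does not change $H^n_c$. Note that one cannot simply invoke ``a regular neighborhood of a complex is properly homotopy equivalent to it,'' since $R_1$ may exceed the local-geometry scale of $Y$ and $W$ need not be a thin tube. What is needed is a genuinely large-scale argument: either (i) that $N_R(fX)$, given a controlled triangulation, is uniformly acyclic once $R$ is large — essentially because $N_R(fX)$ is coarsely equivalent to the uniformly acyclic space $X$ via $g$ — so that the Kapovich--Kleiner chain homotopy of Lemma~\ref{lemma:coho} applies verbatim to $gX\hookrightarrow N_R(fX)$; or (ii) a long-exact-sequence-of-the-pair argument showing that the complement $N_R(fX)\setminus gX$ is acyclic in the relevant degrees. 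Either route is pure bookkeeping of constants built from the uniformity data of $f$, $X$, and $Y$; no new geometric input is required.
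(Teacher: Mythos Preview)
Your first two steps are exactly the paper's argument: apply Lemma~\ref{lemma:approx} to replace $f$ by a continuous simplicial approximation $g$ at bounded distance, then invoke Lemma~\ref{lemma:coho} to get $H^n_c(X)\cong H^n_c(gX)$. The paper's proof stops there.

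You are right to flag the final step as the real issue. The paper's two-sentence proof literally delivers only $H^n_c(X)\cong H^n_c(gX)$, not $H^n_c(X)\cong H^n_c\bigl(N_R(fX)\bigr)$; the passage from the subcomplex $gX$ to the thickening $N_R(fX)$ is not addressed. Your proposed remedies (rerunning the chain-homotopy construction for $gX\hookrightarrow N_R(fX)$, or a relative-cohomology argument) are reasonable, but note that for the only place the corollary is used (Theorem~\ref{thm:coarse}) this gap is harmless: there $Y=X$ is a triangulated $\mathbb{R}^n$, $gX$ is a closed subcomplex, and Lemma~\ref{lemma:everything} applied directly to $gX$ already forces $gX=X$, hence $f$ is coarsely surjective with $C'$ the approximation error. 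So the paper's omission is a genuine imprecision in the statement of the corollary rather than in the theorem it feeds; your proposal is more careful than the paper on this point, and otherwise matches it.
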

\begin{thm}[Coarse co-Hopficity]
\label{thm:coarse} Let $(X, d_X)$ be a manifold homeomorphic to $\R^n$, with $d_X$ a path metric that is uniformly acyclic and uniformly contractible. For each pair of non-decreasing functions $\alpha, \omega: [0,\infty) \rightarrow \R$ with $\alpha(t)<\omega(t)$ and $\lim_{t\rightarrow \infty} \alpha(t)=\infty$, there exists a $C'$ such that any $(\alpha,\omega)$-coarse embedding $f: X \rightarrow X$ is $C'$-coarsely surjective.
\begin{proof} By Corollary \ref{cor:isomorphism}, there is a uniform $R>0$ such that  $H^n_c(N_R(fX)) \cong H^n_c(X) \cong \mathbb Z$. By Lemma \ref{lemma:everything}, $N_R(fX) =X$. Taking $C' = C+2R$ completes the proof.
\end{proof}
\end{thm}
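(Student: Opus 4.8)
The plan is to deduce Theorem~\ref{thm:coarse} directly from the compactly-supported cohomology machinery developed in this section, treating the coarse embedding $f$ as a uniform embedding and combining Corollary~\ref{cor:isomorphism} with Lemma~\ref{lemma:everything}. The first step is to observe that an $(\alpha,\omega)$-coarse embedding $f\colon X\to X$ is in particular a uniform embedding in the sense required by Lemma~\ref{lemma:approx} and Lemma~\ref{lemma:coho}: the functions $\alpha,\omega$ with $\lim_{t\to\infty}\alpha(t)=\infty$ are precisely the data of a uniform embedding, and the uniformity constants of $f$ are determined by $\alpha$ and $\omega$. Since $X$ is homeomorphic to $\R^n$ and carries a uniformly acyclic, uniformly contractible path metric, $X$ satisfies the hypotheses of Corollary~\ref{cor:isomorphism} with $Y=X$. (One should fix an underlying geometrically finite simplicial structure on $X$ quasi-isometric to the given metric, so that the simplicial-approximation arguments of Lemma~\ref{lemma:approx} apply; this is routine for a uniformly contractible manifold.)

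Next I would apply Corollary~\ref{cor:isomorphism} to obtain a constant $R>0$, depending only on the uniformity constants of $f$ and of $X$ (hence only on $\alpha,\omega$ and $X$), such that
\begin{align}
H^n_c\big(N_R(fX)\big)\;\cong\;H^n_c(X).
\end{align}
Since $X$ is a manifold homeomorphic to $\R^n$, we have $H^n_c(X)\cong H^n_c(\R^n)\cong\Z$, so $H^n_c(N_R(fX))\cong\Z$ is in particular nonzero. Now identify $X$ homeomorphically with $\R^n$; then $N_R(fX)$ is a closed subset of $\R^n$ with nonzero top compactly-supported cohomology, so Lemma~\ref{lemma:everything} forces $N_R(fX)=X$. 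In other words, the $R$-neighborhood of the image of $f$ is all of $X$.

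Finally I would translate this into coarse surjectivity with an explicit constant. Let $C$ be the Hausdorff-type constant implicit in passing between the abstract simplicial model and the given metric on $X$ (the constant $C$ appearing in the theorem statement, coming from the quasi-isometry between the simplicial structure and $(X,d_X)$). Since every point of $X$ lies within $R$ of $fX$ in the simplicial metric, it lies within $C+2R$ of $fX$ in $d_X$; taking $C'=C+2R$ shows $f$ is $C'$-coarsely surjective, and $C'$ depends only on $\alpha,\omega$ and $X$, as claimed.

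The main obstacle is the verification underlying the first paragraph: ensuring that a coarse embedding in the sense of $(*)$, with no a~priori continuity or simplicial structure, genuinely feeds into the hypotheses ``uniform embedding between geometrically finite uniformly contractible simplicial complexes'' needed for Corollary~\ref{cor:isomorphism}. Concretely, one must equip the topological manifold $X\cong\R^n$ with a simplicial structure whose metric is bi-Lipschitz (or at least quasi-isometric with controlled constants) to $d_X$ and which is still uniformly acyclic and uniformly contractible, and then check that $f$, post-composed with the identity identification, is a uniform embedding of this complex to itself with uniformity constants controlled by $\alpha,\omega$. Once that bookkeeping is in place, the cohomological steps and the final neighborhood estimate are immediate from the already-proved lemmas.
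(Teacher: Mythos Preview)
Your proposal is correct and follows essentially the same route as the paper's proof: apply Corollary~\ref{cor:isomorphism} to get $H^n_c(N_R(fX))\cong H^n_c(X)\cong\Z$, invoke Lemma~\ref{lemma:everything} to conclude $N_R(fX)=X$, and set $C'=C+2R$. The bookkeeping you flag about passing to a geometrically finite simplicial model is a genuine detail the paper elides, but your outline of how to handle it is the standard one and does not alter the argument.
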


\section{Main Result}

\begin{thm}[Quasi-Isometric co-Hopficity]
Let $\Omega \subset X$ and $\Omega' \subset X'$ be equivariant truncated spaces and $f: (\Omega, d_\Omega) \rightarrow (\Omega',d_{\Omega'})$ a quasi-isometric embedding. Then $f$ is coarsely surjective with respect to the truncated metric $d_\Omega$.
\begin{proof}
By Lemma \ref{lemma:schwartz1}, we may assume that $f$ maps boundary horospheres of $\Omega$ to boundary horospheres of $\Omega'$.  By Theorem \ref{thm:coarse}, $f$ is a surjection up to a constant independent of the boundary horosphere in question. We then have an extension $F: X \rightarrow X'$, as in Definition \ref{defi:extension}.

By Lemma \ref{lemma:schwartz2}, for each boundary horoball $\mathcal B$, the restriction $F\vert_{\mathcal B}$ is a quasi-isometry. By assumption, $F\vert_\Omega$ is a $d_\Omega$-quasi-isometry, so $F\vert_\Omega$  is a $d$-uniform embedding by Proposition \ref{prop:uniform}. Since $X$ is a path metric space, $F$ is then coarsely Lipschitz on all of $X$. 

We now show that $F$ is a uniform embedding by establishing a lower bound for distances between image points. Recall that all distances are measured with respect to $d=d_X$ unless another metric is explicitly mentioned.

Let $L\gg 2$ so that $F$ is coarsely $L$-Lipschitz and $F\vert_\mathcal B$ is coarsely $L$-co-Lipschitz for every boundary horoball $\mathcal B$.  Let $\alpha, \omega$ be increasing proper functions so that $f$ is an $(\alpha,\omega)$-uniform embedding.

Let $x_1, x_2 \in X$ with $d(x_1, x_2) \gg  0$. We need to provide a lower bound for $d(Fx_1, Fx_2)$ in terms of $d(x_1, x_2)$. Clearly, the lower bound will go to $\infty$ since $F$ is an isometry along vertical geodesics in horoballs. There are four cases to consider; in all cases we can ignore additive noise by working with sufficiently large $d(x_1, x_2)$ and slightly increasing $L$ .

\begin{enumerate}
\item Let $x_1, x_2 \in \mathcal B$ for the same horoball $\mathcal B$. Then $d(fx_1, fx_2) > d(x_1, fx_2)/L$.
\item Let $x_1, x_2 \in \Omega$. This case is controlled by the uniform embeddings $\Omega \hookrightarrow X$ and $\Omega' \hookrightarrow X'$ (Proposition \ref{prop:uniform}) and the $d_\Omega$-quasiisometry constants of $f$.
\item Let $x_1 \in \Omega, x_2 \in \mathcal B$ for a horoball $\mathcal B$. Let $b \in B$ be the closest point to $x_1$. Then by Lemma \ref{lemma:horoball1}, $[x_1, b] \cup [b, x_2]$ is a $C$-quasi-geodesic for a universal $C$ depending only on $X$ and $X'$ (see also Figure \ref{fig:horoball1}). We consider two sub-cases:

Suppose that $d(x_1, b) > d(x_1,x_2)/L^3$.  Let $b' \in f \mathcal B$ be the closest point to $f x_1$. Then by definition of $b$, we have
\begin{align*}
d(f^{-1}b', x_1) \geq d(b,x_1) \geq d(x_1, x_2)/L^3.
\end{align*}
Using Lemma \ref{lemma:horoballs3}, we conclude
$$d(fx_1, fx_2) \geq d(b', fx_2) \geq \alpha(d(f^{-1}, x_2)) \geq \alpha( d(x_1, x_2)/L^3).$$

Suppose, instead, that $d(x_1, b) \leq d(x_1, x_2)/L^3$. Then we have the estimate $d(fx_1, fb) \leq d(x_1, x_2)/L^2$. We also have $d(x_2, b) \approx d(x_1, x_2)$, so $d(fx_2, fb) \geq d(x_1, x_2)/L$. Consider now $b'\in \mathcal B$, the closest point to $fx_1$. By Lemma \ref{lemma:horoball1}, $d(fb, fb') \leq d(fb, fx_1)$. Thus, $$d(fx_1, fx_2) \geq d(x_1, x_2)/L - d(x_1,x_2)/L^2.$$

\item Let $x_1 \in \mathcal B_1, x_2 \in \mathcal B_2$ be in disjoint horoballs. This case is identical to the previous one, except one uses Lemma \ref{lemma:horoball2} rather than \ref{lemma:horoball1}.
\end{enumerate}

We have then provided a lower bound for $d(Fx_1,Fx_2)$ for any pair of points $x_1, x_2 \in X$. Thus, the extended map $F$ is a coarse embedding. By Theorem \ref{thm:coarse}, $F$ is then coarsely surjective. Namely, there exists $R>0$ so that $N_R(F(X))=X'$ (the neighborhood is taken with respect to $d$). 

We now show that the coarse surjectivity of $F$ with respect to $d$ implies the coarse surjectivity of $f$ with respect to $d_\Omega$.

Let $\omega' \in \Omega'$ be an arbitrary point. Since $F$ is coarsely surjective, there exists $x \in X$ so that $d_{X'}(f(x), \omega') \leq R$. If $x \in \Omega$, then we have shown that $\omega' \in N_R(f(\Omega))$. Otherwise, $x$ is contained in a horoball associated with $\Omega$. In appropriate horospherical coordinates, the horoball is given by $S \times (t_0, \infty)$ and $x$ can be written as $(s_1, t_1)$, with $t_1 > t_0$. Likewise, $f(x)$ has coordinates $(s'_1, t'_1)$, with $(t'_1 > t'_0)$. Furthermore, we have $f(s_1, t_0) = (s'_1, t_0)$. Now, $\omega' \in \Omega'$, so it has horospherical coordinates $(s'_2, t'_2)$ with $t'_2 < t'_0$. It is easy to see that 
\begin{align}R \geq d_{X'}( \omega', (s'_1, t'_1)) \geq d_{X'}( \omega', (s'_1, t'_0)) \\= d_{X'}(\omega', f(s_1, t_0) \geq d_{X'}(\omega', f(\Omega)).\nonumber
\end{align}
Thus, for an arbitrary $\omega' \in \Omega'$ we have $d_{X'}(\omega', f(\Omega)) \leq R$. Because $\Omega' \hookrightarrow X'$ is a uniform embedding, this implies that $f: \Omega \rightarrow \Omega'$ is coarsely surjective.
\end{proof}
\end{thm}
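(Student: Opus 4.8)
The plan is to leverage the earlier results in the precise order the paper has set up: the Schwartz structure result (Lemma~\ref{lemma:schwartz1}), the coarse co-Hopficity of Euclidean-like manifolds (Theorem~\ref{thm:coarse}), the horoball-extension machinery (Definition~\ref{defi:extension} and Lemma~\ref{lemma:schwartz2}), and the horoball rough-geodesic lemmas (Lemmas~\ref{lemma:horoball1}, \ref{lemma:horoball2}, \ref{lemma:horoballs3}). First I would normalize: by Lemma~\ref{lemma:schwartz1} and nearest-point projection I may assume that $f$ sends each boundary horosphere $\sigma$ of $\Omega$ into a boundary horosphere $\sigma'$ of $\Omega'$. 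A boundary horosphere is (isometrically) a nilpotent Lie group with a left-invariant metric, hence a manifold homeomorphic to $\R^{n-1}$ satisfying the hypotheses of Theorem~\ref{thm:coarse}; applying that theorem gives that $f\vert_\sigma$ is coarsely surjective onto $\sigma'$, with constants uniform in $\sigma$ because the quotient $\Omega/\Gamma$ is compact and there are only finitely many $\Gamma$-orbits of horospheres. So $f\vert_\sigma:\sigma\to\sigma'$ is a quasi-isometry with uniformly controlled constants.

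Next I would extend. By Definition~\ref{defi:extension}, filling in each peripheral horoball via the horospherical-coordinate identification $\mathcal B\cong\sigma\times\R^+$ produces $F:X\to X'$, and by Lemma~\ref{lemma:schwartz2} each restriction $F\vert_{\mathcal B}$ is a quasi-isometry with uniform constants. The heart of the argument is then to show $F:X\to X'$ is a coarse embedding. The upper bound (coarse Lipschitz) is easy: $F\vert_\Omega$ is a $d$-uniform embedding by Proposition~\ref{prop:uniform}, $F\vert_{\mathcal B}$ is coarsely Lipschitz on each horoball, and since $X$ is a path-metric space one patches these together along the (finitely many local types of) horoball boundaries. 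For the lower bound, given $x_1,x_2\in X$ with $d(x_1,x_2)$ large, I would do a case analysis according to whether each $x_i$ lies in $\Omega$ or in a peripheral horoball. When both lie in $\Omega$, Proposition~\ref{prop:uniform} converts between $d$ and $d_\Omega$ and one uses the quasi-isometry constants of $f$ directly. When one or both points lie in horoballs, one uses Lemma~\ref{lemma:horoball1} (resp.\ Lemma~\ref{lemma:horoball2}) to replace the $d$-geodesic between $x_1,x_2$ by a $C$-rough geodesic that enters $\Omega$ through controlled "caps" at the tops of the horoballs; then one splits the distance $d(x_1,x_2)$ into the $\Omega$-part and the vertical parts inside horoballs, estimates each using respectively the uniform-embedding bound for $f\vert_\Omega$ and the fact that $F$ is an isometry (or uniform quasi-isometry) along the vertical fibers, and invokes Lemma~\ref{lemma:horoballs3} to control how far into a horoball a point can be relative to $d(x_1,x_2)$. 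The outcome is a proper lower bound $d(Fx_1,Fx_2)\ge \eta(d(x_1,x_2))$ for some proper $\eta$.

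With $F:X\to X'$ established as a coarse embedding between rank-one symmetric spaces (which are manifolds homeomorphic to $\R^n$ with uniformly acyclic, uniformly contractible path metrics), Theorem~\ref{thm:coarse} applies and yields $R>0$ with $N_R(F(X))=X'$. The final step is to descend this back to $\Omega'$: given $\omega'\in\Omega'$, pick $x\in X$ with $d_{X'}(F(x),\omega')\le R$; if $x\in\Omega$ we are done, and if $x$ lies in a peripheral horoball with horospherical coordinates $(s_1,t_1)$, then projecting down the vertical fiber to the boundary point $(s_1,t_0)\in\sigma\subset\Omega$ only decreases the $d_{X'}$-distance to $\omega'$ (since $\omega'$ lies below $\sigma'$ in those coordinates), so $d_{X'}(\omega',f(\Omega))\le R$; then Proposition~\ref{prop:uniform} upgrades this $d_{X'}$-closeness to $d_{\Omega'}$-closeness, giving coarse surjectivity of $f$ with respect to $d_\Omega$. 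I expect the main obstacle to be the lower-bound step for $F$: keeping the constants uniform across the (finitely many types of) horoballs and correctly combining the horoball rough-geodesic lemmas with the quasi-isometry constants of $f\vert_\Omega$ — in particular handling the sub-case where $x_1$ is deep inside $\Omega$ but close to a horoball whose interior contains $x_2$, where one must argue that the nearest-point projections $b$ and $b'$ are compatible under $f$ up to bounded error.
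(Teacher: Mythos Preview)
Your proposal is correct and follows essentially the same approach as the paper: normalize horospheres via Lemma~\ref{lemma:schwartz1}, apply Theorem~\ref{thm:coarse} on horospheres to get uniform quasi-isometries there, extend to $F:X\to X'$ via Definition~\ref{defi:extension} and Lemma~\ref{lemma:schwartz2}, establish that $F$ is a coarse embedding by a four-case analysis using the horoball lemmas, apply Theorem~\ref{thm:coarse} to $F$, and descend coarse surjectivity back to $\Omega'$ via vertical projection and Proposition~\ref{prop:uniform}. The paper's lower-bound argument in the mixed case is organized around an explicit threshold $d(x_1,b)\gtrless d(x_1,x_2)/L^3$ rather than the more conceptual ``split into $\Omega$-part plus vertical parts'' you describe, but this is exactly the implementation of your plan, and you have correctly anticipated that the compatibility of the nearest-point projections $b$ and $b'$ is where the work lies.
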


\bibliographystyle{amsplain}

\def\cprime{$'$}
\providecommand{\bysame}{\leavevmode\hbox to3em{\hrulefill}\thinspace}
\providecommand{\MR}{\relax\ifhmode\unskip\space\fi MR }
\providecommand{\MRhref}[2]{%
  \href{http://www.ams.org/mathscinet-getitem?mr=#1}{#2}
}
\providecommand{\href}[2]{#2}

\end{document}